\newtheorem{theorem}{Theorem}[section]
\newtheorem{proposition}[theorem]{Proposition}
\newtheorem{lemma}[theorem]{Lemma}
\newtheorem{corollary}[theorem]{Corollary}
\newtheorem{remark}[theorem]{Remark}
\numberwithin{equation}{section}
\begin{document}

\title{Chen--Ruan cohomology of some moduli spaces, II}

\author[I. Biswas]{Indranil Biswas}

\address{School of Mathematics, Tata Institute of Fundamental
Research, Homi Bhabha Road, Mumbai 400005, India}

\email{indranil@math.tifr.res.in}

\author[M. Poddar]{Mainak Poddar}

\address{Theoretical Statistics and Mathematics Unit, Indian
Statistical Institute, 203 B. T. Road, Kolkata 700108, India}

\email{mainak@isical.ac.in}

\subjclass[2000]{14D20, 14F05}

\keywords{Chen--Ruan cohomology, Riemann surface, stable
bundle, moduli space}

\date{}

\begin{abstract}
Let $X$ be a compact connected Riemann surface of genus
at least two. Let $r$ be a prime number and $\xi\,
\longrightarrow\,X$ a holomorphic line bundle such that
$r$ is not a divisor of $\text{degree}(\xi)$. Let
${\mathcal M}_\xi(r)$ denote the moduli space of stable
vector bundles over $X$ of rank $r$ and determinant $\xi$.
By $\Gamma$ we will denote the group of line bundles $L$ over
$X$ such that $L^{\otimes r}$ is trivial. This group
$\Gamma$ acts on ${\mathcal M}_\xi(r)$ by the rule
$(E\, ,L)\,\longmapsto\, E\bigotimes L$. We compute the
Chen--Ruan cohomology of the corresponding orbifold.
\end{abstract}

\maketitle

\section{Introduction}\label{int}

Let $X$ be a compact connected Riemann surface of genus
$g$, with $g\,\geq\, 2$. Let ${\mathcal M}_\eta(2)$ denote
the moduli space of stable vector bundles $E$ over $X$
of rank two with $\bigwedge^2 E\,=\, \eta$, where $\eta\,
\longrightarrow\, X$ is a fixed holomorphic line bundle of
degree one. This ${\mathcal M}_\eta(2)$ is a smooth complex
projective variety of dimension $3g-3$.
Let $\Gamma_2\,=\, \text{Pic}^0(X)_2$ be the
group of holomorphic line bundles $L$ over $X$ such that
$L^{\otimes 2}$ is trivial. Such a line bundle $L$ defines
an involution of ${\mathcal M}_\eta(2)$ by sending any
$E$ to $E\bigotimes L$. This defines a holomorphic action of
$\Gamma_2$ on ${\mathcal M}_\eta(2)$.
In \cite{BP}, we computed the Chen--Ruan cohomology
algebra of the corresponding orbifold
${\mathcal M}_\eta(2)/\Gamma_2$.
(See \cite{CR1}, \cite{CR2}, \cite{Ru}, \cite{AR} for
Chen--Ruan cohomology.) We note that
${\mathcal M}_\eta(2)/\Gamma_2$ is the moduli space of
topologically nontrivial stable $\text{PSL}(2,
{\mathbb C})$--bundles over $X$.

Our aim here is to extend the above result to the moduli
spaces of vector bundles of rank $r$ over $X$, where $r$
is any prime number.

Fix a holomorphic line bundle $\xi\, \longrightarrow\, X$
of degree $d$ such that $d$ is not a multiple of the fixed
prime number $r$. Let ${\mathcal M}_\xi(r)$
denote the moduli space of stable vector bundles $E\,
\longrightarrow \,X$ of rank $r$ with $\bigwedge^r E\,=\, \xi$.
This moduli space ${\mathcal M}_\xi(r)$ is an
irreducible smooth complex
projective variety of dimension $(r^2-1)(g-1)$.
Let $\Gamma$ denote the
group of holomorphic line bundles $L$ over $X$ such that
$L^{\otimes r}$ is trivial. This group $\Gamma$ is isomorphic
to $({\mathbb Z}/r{\mathbb Z})^{\oplus 2g}$. Any line bundle
$L\,\in\,\Gamma$ defines a holomorphic automorphism of
${\mathcal M}_\xi(r)$ by sending any $E$ to $E\bigotimes L$.
These automorphisms together define a holomorphic action of
$\Gamma$ on ${\mathcal M}_\xi(r)$.

The corresponding orbifold ${\mathcal M}_\xi(r)/\Gamma$
is the moduli
space of stable $\text{PSL}(r,{\mathbb C})$--bundles
$F$ over $X$ such that the second Stiefel--Whitney class
$$
w_2(F)\,\in\, H^2(X,\, {\mathbb Z}/r{\mathbb Z})\,=\,
{\mathbb Z}/r{\mathbb Z}
$$
coincides with the image of $d$.

We compute the Chen--Ruan cohomology algebra of the
orbifold ${\mathcal M}_\xi(r)/\Gamma$.

We note that the results of Section \ref{sec2} and
Section \ref{sec3} are proved for all integers $r$. From
Section \ref{sec4} onwards we assume that $r$ is a prime
number.

\section{Group action and cohomology of fixed point
sets}\label{sec2}

Let $X$ be a compact connected Riemann surface of genus $g$,
with $g\,\geq\, 2$. Fix a holomorphic line bundle
\begin{equation}\label{e0}
\xi\, \longrightarrow\, X\, .
\end{equation}
Let $d\, \in\, {\mathbb Z}$ be the degree of $\xi$. Fix an integer
$r\, \geq\, 2$ which is coprime to $d$. Let ${\mathcal M}_\xi(r)$
denote the moduli space of stable vector bundles $E\,\longrightarrow
\, X$ of rank $r$ and $\det E\, :=\, \bigwedge^r E\,=\, \xi$. This
moduli space ${\mathcal M}_\xi(r)$ is an irreducible smooth complex
projective variety of dimension $(r^2-1)(g-1)$. The group of
all holomorphic automorphisms of ${\mathcal M}_\xi(r)$ will be
denoted by $\text{Aut}({\mathcal M}_\xi(r))$; it is known to be
a finite group.

Define
\begin{equation}\label{e1}
\Gamma\, :=\, \{L\,\in\, {\rm Pic}^0(X)\, \mid\,
L^{\otimes r}\,=\, {\mathcal O}_X\}\, .
\end{equation}
It is a group under the operation of tensor product of
line bundles. The order of $\Gamma$ is
$r^{2g}$. For any $L\, \in\, \Gamma$, let
\begin{equation}\label{e2}
\phi_L\,\in\, \text{Aut}({\mathcal M}_\xi(r))
\end{equation}
be the automorphism defined by $E\, \longmapsto\, E\bigotimes L$.
Let
\begin{equation}\label{e3}
\phi\, :\,\Gamma\, \longrightarrow\, \text{Aut}({\mathcal M}_\xi(r))
\end{equation}
be the homomorphism defined by $L\, \longmapsto\, \phi_L$. We
will describe the fixed point set
\begin{equation}\label{e03}
{\mathcal M}_\xi(r)^L\, :=\, {\mathcal M}_\xi(r)^{\phi_L}
\, \subset\, {\mathcal M}_\xi(r)
\end{equation}
of the automorphism $\phi_L$.

Take any nontrivial line bundle $L\, \in\, \Gamma\setminus
\{{\mathcal O}_X\}$. Let $\ell$ denote the order of
$L$. So $\ell$ is a divisor of $r$. Fix a nonzero
holomorphic section
$$
s\, :\, X\, \longrightarrow\, L^{\otimes \ell}\, .
$$
Define
\begin{equation}\label{e4}
Y_L\, :=\, \{z\in L\,\mid\, z^{\otimes\ell}\, \in\,
\text{image}(s)\}\, \subset\, L\, .
\end{equation}
Let
\begin{equation}\label{e5}
\gamma_L\, :\, Y_L\, \longrightarrow\, X
\end{equation}
be the restriction of the natural projection $L\, \longrightarrow
\,X$. Consider the action of the multiplicative group ${\mathbb
C}^*$ on the total space of $L$. The action of the subgroup
\begin{equation}\label{e04}
\mu_\ell\, :=\, \{c\, \in\, {\mathbb C}\, \mid\,c^\ell\, =\,1\}
\, \subset\, {\mathbb C}^*
\end{equation}
preserves the complex curve $Y_L$ defined in Eq. \eqref{e4}. In
fact, $Y_L$ is a principal $\mu_\ell$--bundle over $X$.
Since any two nonzero holomorphic sections of $L^{\otimes\ell}$
differ by multiplication with a nonzero constant
scalar, the isomorphism class of the principal $\mu_\ell$--bundle
$Y_L\,\longrightarrow\, X$ is independent of the choice of the
section $s$. In particular, the isomorphism class of
the complex curve $Y_L$ depends only on $L$.

Since the order of $L$ is exactly
$\ell$, the curve $Y_L$ is irreducible.

Let ${\mathcal N}^{Y_L}(d)$ denote the moduli space of
stable vector bundle bundles $F\, \longrightarrow\, Y_L$
of rank $r/\ell$ and degree $d$. We have a holomorphic submersion
\begin{equation}\label{e6}
\psi\, :\, {\mathcal N}^{Y_L}(d)\, \longrightarrow\,
\text{Pic}^d(X)
\end{equation}
that sends any $F$ to $\bigwedge^r \gamma_{L*}F$.

Let $\text{Gal}(\gamma_L)$ be the Galois group of
the covering $\gamma_L$ in Eq. \eqref{e5}; so
$\text{Gal}(\gamma_L)\,=\, \mu_\ell$.
For any $V\, \in\, {\mathcal N}^{Y_L}(d)$, clearly
$$
\rho^*V\, \in\, {\mathcal N}^{Y_L}(d)
$$
for all $\rho\, \in\, \text{Gal}(\gamma_L)$. Therefore, $\text{Gal}
(\gamma_L)$ acts on ${\mathcal N}^{Y_L}(d)$; the action of $\rho\,
\in\, \text{Gal}(\gamma_L)$ sends any $V$ to $\rho^*V$. We also
note that $\psi(V)\, =\, \psi(\rho^*V)$, where $\psi$ is the
projection in Eq. \eqref{e6}. Therefore, the action of $\text{Gal}
(\gamma_L)$ on ${\mathcal N}^{Y_L}(d)$ preserves the subvariety
\begin{equation}\label{e-2}
\psi^{-1}(\xi)\, \subset\, {\mathcal N}^{Y_L}(d)\, ,
\end{equation}
where $\xi$ is the line bundle in Eq. \eqref{e0}.

\begin{lemma}\label{lem1}
Take any nontrivial line bundle
$L\, \in\, \Gamma$. The fixed point set ${\mathcal
M}_\xi(r)^L$ (see Eq. \eqref{e03}) is identified with the quotient
variety $\psi^{-1}(\xi)/{\rm Gal}(\gamma_L)$ (see Eq.
\eqref{e-2}). The identification is defined by $F\,
\longmapsto\, \gamma_{L*}F$.
\end{lemma}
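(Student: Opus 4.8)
The plan is to realize the identification through the standard dictionary between sheaves on a cyclic étale cover and sheaves on the base equipped with a twisting isomorphism, and then to match the determinant constraint and stability on the two sides. First I would record the basic mechanism that places $\gamma_{L*}F$ in the fixed locus. Since $Y_L$ is defined as the $\ell$-th roots of a section of $L^{\otimes\ell}$, the pullback $\gamma_L^*L$ carries a tautological trivialization, so $\gamma_L^*L\cong\mathcal{O}_{Y_L}$. The projection formula then gives, for every $F$ on $Y_L$,
\[
\gamma_{L*}F\otimes L \;\cong\; \gamma_{L*}(F\otimes\gamma_L^*L)\;\cong\;\gamma_{L*}F,
\]
so $\gamma_{L*}F$ is fixed by $\phi_L$. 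A bundle $F$ of rank $r/\ell$ pushes forward to a bundle of rank $r$ (the cover has degree $\ell$), and étaleness makes $\gamma_L$ degree-preserving, so a degree $d$ bundle maps to a degree $d$ bundle; the determinant condition $\det(\gamma_{L*}F)=\xi$ is by definition $\psi(F)=\xi$. Thus $F\mapsto\gamma_{L*}F$ carries $\psi^{-1}(\xi)$ into the $\phi_L$-fixed bundles of rank $r$ and determinant $\xi$. Because $\gamma_L\circ\rho=\gamma_L$ for $\rho\in\mathrm{Gal}(\gamma_L)$, we have $\gamma_{L*}(\rho^*F)\cong\gamma_{L*}F$, so the map factors through $\psi^{-1}(\xi)/\mathrm{Gal}(\gamma_L)$.

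To invert it I would use the algebra $\mathcal{A}:=\gamma_{L*}\mathcal{O}_{Y_L}\cong\bigoplus_{i=0}^{\ell-1}L^{-i}$, whose multiplication is induced by $L^{\otimes\ell}\cong\mathcal{O}_X$. Coherent sheaves on $Y_L$ are the same as $\mathcal{A}$-modules on $X$, and for a vector bundle this structure is precisely a pair $(E,\sigma)$ with $\sigma\colon E\otimes L\xrightarrow{\sim}E$ whose $\ell$-fold iterate (using $L^{\otimes\ell}\cong\mathcal{O}_X$) is the identity of $E$. Given a fixed point $E\in\mathcal{M}_\xi(r)^L$, some isomorphism $\sigma_0\colon E\otimes L\to E$ exists; its $\ell$-fold composite is an endomorphism of the stable, hence simple, bundle $E$, so it is a scalar, and after rescaling $\sigma_0$ by a suitable constant I obtain $\sigma$ with the iterate equal to the identity. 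This $(E,\sigma)$ is the pushforward of a unique bundle $F$ on $Y_L$, necessarily of rank $r/\ell$ and with $\psi(F)=\det E=\xi$. The rescaling constant is determined only up to an $\ell$-th root of unity, and twisting $\sigma$ by $\zeta\in\mu_\ell$ replaces $F$ by $\rho^*F$ for the corresponding $\rho\in\mathrm{Gal}(\gamma_L)$; hence $F$ is well defined as a point of $\psi^{-1}(\xi)/\mathrm{Gal}(\gamma_L)$, giving a two-sided inverse to the pushforward map.

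The main obstacle is matching stability, i.e.\ showing $E=\gamma_{L*}F$ is stable exactly when $F$ is. One direction is easy: since $\gamma_{L*}$ is exact, preserves degrees, and multiplies ranks by $\ell$, it divides slopes by $\ell$ and therefore carries a destabilizing subsheaf of $F$ to a destabilizing subsheaf of $\gamma_{L*}F$, so stability of $\gamma_{L*}F$ forces stability of $F$. The delicate direction is that a stable $F$ with free $\mathrm{Gal}(\gamma_L)$-orbit has stable pushforward; here I would invoke the standard fact that over a finite étale cover $\gamma_{L*}F$ is semistable and is stable precisely when $F$ is not isomorphic to any nontrivial Galois translate $\rho^*F$, the point being that $\gamma_L^*\gamma_{L*}F\cong\bigoplus_{\rho}\rho^*F$ and an adjunction argument confines any destabilizing subsheaf to a sub-sum of this decomposition. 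Finally I would check that the coprimality hypothesis (here $r$ coprime to $d$) guarantees all orbits are free: if $F\cong\rho^*F$ for $\rho$ of order $k>1$ with $k\mid\ell$, then simplicity of $F$ lets it descend to the intermediate cover $Y_L/\langle\rho\rangle\to X$ of degree $\ell/k$, forcing $k\mid d$; but $k\mid\ell\mid r$ together with $\gcd(r,d)=1$ gives $k=1$, a contradiction. With stability matched on both sides and all orbits free, the set-theoretic bijection above upgrades, working in families, to an isomorphism of varieties $\psi^{-1}(\xi)/\mathrm{Gal}(\gamma_L)\xrightarrow{\sim}\mathcal{M}_\xi(r)^L$.
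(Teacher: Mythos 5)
Your proof is correct, and it reaches the identification by a route that overlaps with the paper's but diverges at two substantive points. The reconstruction of $F$ from a fixed point $E$ is the same mechanism in different clothing: you work on the base, viewing $E$ together with a normalized isomorphism $\sigma\colon E\otimes L\to E$ as a module over $\mathcal{A}=\gamma_{L*}\mathcal{O}_{Y_L}\cong\bigoplus_{i=0}^{\ell-1}L^{-i}$, while the paper works on the cover: it pulls $\theta'$ back to $Y_L$, normalizes so that $(\theta_0)^\ell=\mathrm{Id}$, decomposes $\gamma_L^*E$ into $\mu_\ell$--eigenbundles which the Galois group permutes simply transitively, and sets $F=E^1$ (note that both formulations use irreducibility of $Y_L$ --- the paper to make the characteristic polynomial of $\theta_0$ constant, you to get constant rank $r/\ell$). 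Similarly, your injectivity argument (the module structure on a simple $E$ is unique up to a $\mu_\ell$--twist, and twisting by $\zeta$ replaces $F$ by $\rho^*F$) substitutes for the paper's argument via the decomposition of $\mathrm{Hom}(\gamma_L^*\gamma_{L*}F,\,\gamma_L^*\gamma_{L*}F')$ into $\mathrm{Hom}(\eta^*F,\tau^*F')$ and stability of same-slope bundles. The genuine divergence is in proving that $\gamma_{L*}F$ is stable when $F$ is: the paper proves only semistability (from $\gamma_L^*\gamma_{L*}F=\bigoplus_\rho\rho^*F$) and then gets stability for free, since $\gcd(r,d)=1$ forces every semistable bundle of rank $r$ and degree $d$ to be stable; you instead invoke the polystable-plus-free-orbit criterion for stability of an \'etale pushforward, which obliges you to prove that the Galois orbits on $\psi^{-1}(\xi)$ are free, done by descending $F$ to an intermediate cover and extracting $k\mid d$, $k\mid r$. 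Both routes work. The paper's coprimality shortcut is lighter --- it needs nothing beyond semistability --- whereas yours rests on the (standard, but here unproved) stable-iff-free-orbit fact; in compensation, your route establishes along the way that the ${\rm Gal}(\gamma_L)$--action on $\psi^{-1}(\xi)$ is free, a statement the paper defers to Lemma \ref{lem-3} and proves there by a different argument (the vanishing of $H^0(X,\,{\mathcal F}_t)$ for $t\neq 1$).
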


\begin{proof}
Let $0_X\subset\, L$ be the image of the zero section of $L
\,\longrightarrow\, X$.
Note that the pullback of $L$ to the
complement $L\setminus\{0_X\}$
has a tautological trivialization. Since $Y_L\,
\subset\, L\setminus\{0_X\}$, the holomorphic line bundle
$\gamma^*_L L$ is canonically trivialized. This trivialization
\begin{equation}\label{triv.}
\gamma^*_L L\, =\, {\mathcal O}_{Y_L}
\end{equation}
defines a holomorphic isomorphism
\begin{equation}\label{e-11}
V\,\longrightarrow\, V\bigotimes {\mathcal O}_{Y_L}\,=\,
V\bigotimes \gamma^*_L L 
\end{equation}
for any vector bundle $V\,\longrightarrow\, Y_L$; the
above isomorphism $V\,\longrightarrow\, V\bigotimes
{\mathcal O}_{Y_L}$ sends any $v$ to $v\bigotimes 1$.
Using projection formula, the isomorphism
in Eq. \eqref{e-11} gives an isomorphism
\begin{equation}\label{e-12}
\gamma_{L*} V\, \longrightarrow\, \gamma_{L*} (V\bigotimes
(\gamma^*_L L))\, =\, (\gamma_{L*} V) \bigotimes L\, .
\end{equation}

Note that
$$
\gamma^*_L\gamma_{L*} V\, =\, \bigoplus_{\rho\in
\text{Gal}(\gamma_L)} \rho^* V\, .
$$
Hence $\gamma^*_L\gamma_{L*} V$ is semistable if
$V$ is so. This implies that if $V$ is semistable, then the
vector bundle $\gamma_{L*} V$ is also semistable. Since $d
\,=\,\text{degree}(\xi)$ is coprime
to $r$, any semistable vector bundle over $X$ of rank $r$
and degree $d$ is stable. Therefore,
$\gamma_{L*} V\, \in\, {\mathcal M}_\xi(r)$
for each $V\,\in\, \psi^{-1}(\xi)$. In view
of the isomorphism in Eq. \eqref{e-12} we
conclude that
$$
\gamma_{L*} V\,\in\,{\mathcal M}_\xi(r)^L
$$
if $V\,\in\, \psi^{-1}(\xi)$.

Since $\gamma_{L*}F\, =\, \gamma_{L*}(\rho^* F)$, for all
$\rho\, \in\, \text{Gal}(\gamma_L)$, we get a morphism
\begin{equation}\label{whg}
\widehat{\gamma}\, :\,
\psi^{-1}(\xi)/\text{Gal}(\gamma_L)\, \longrightarrow\,
{\mathcal M}_\xi(r)^L
\end{equation}
defined by $V\, \longmapsto\, \gamma_{L*} V$.

To construct the inverse of the map $\widehat{\gamma}$, take
any $E\, \in\,{\mathcal M}_\xi(r)^L$. Fix an isomorphism
\begin{equation}\label{b0}
\theta'\, :\, E\,\longrightarrow\, E\bigotimes L\, .
\end{equation}
Since $E$ is stable, it follows that $E$ is simple; this
means that all automorphisms of $E$ are constant scalar
multiplications. Therefore, any two isomorphisms between
$E$ and $E\bigotimes L$ differ by multiplication
with a constant scalar. Let
\begin{equation}\label{b}
\theta\, \in\, H^0(X,\, End(E)\bigotimes L)
\end{equation}
be the section defined by $\theta'$ in Eq. \eqref{b0}.

Consider the pullback
$$
\gamma^*_L \theta\, \in\, H^0(Y_L,\, \gamma^*_L End(E)
\bigotimes\gamma^*_L L)
$$
of the section in Eq. \eqref{b}. Using the canonical trivialization
of the line bundle $\gamma^*_L L$ (see Eq. \eqref{triv.}), this
section $\gamma^*_L \theta$ defines a holomorphic section
\begin{equation}\label{b1}
\theta_0\, \in\, H^0(Y_L,\, \gamma^*_L End(E))\, =\,
H^0(Y_L,\, End(\gamma^*_L E))\, .
\end{equation}
Since $Y_L$ is irreducible (this was noted earlier) it
does not admit any nonconstant holomorphic functions, hence
the characteristic polynomial of $\theta_0(x)$ is independent of
the point $x\, \in\, Y_L$. Therefore, the set of eigenvalues of
$\theta_0(x)$ does not change as $x$ moves over $Y_L$.
Similarly, the multiplicity of each
eigenvalue of $\theta_0(x)$ is also independent of
$x\, \in\, Y_L$. Therefore, for each eigenvalue $\lambda$ of
$\theta_0(x)$, we have the associated generalized eigenbundle
\begin{equation}\label{eb}
\gamma^*_L E\, \supset\, E^\lambda\, \longrightarrow\, Y_L
\end{equation}
whose fiber over any $y\, \in\, Y_L$ is the
generalized eigenspace of $\theta_0(y)\, \in\, \text{End}
((\gamma^*_L E)_y)$ for the eigenvalue $\lambda$.

Recall that $Y_L$ was constructed by fixing a section $s$
of $L^{\otimes\ell}$ (see Eq. \eqref{e4}); it was also noted
that the isomorphism class of the covering $\gamma_L$ is independent
of the choice of $s$. We choose $s$ such that the
$\ell$--fold composition
\begin{equation}\label{c.c}
(\theta')^\ell\, :=\,
\overbrace{\theta'\circ\cdots\circ
\theta'}^{\ell\mbox{-}\rm{times}}\,:\,
E\, \longrightarrow\, E\bigotimes L^{\otimes\ell}
\end{equation}
coincides with $\text{Id}_E\bigotimes s$, where $\theta'$
is the homomorphism in Eq. \eqref{b0}. Since the vector
bundle $E$ is simple, there is exactly one
such section $s$. In fact,
$$
s\, =\, \text{trace}((\theta')^\ell)/r\, .
$$
We construct $Y_L$ using this $s$.

With this construction of $Y_L$, we have
$$
(\theta_0)^\ell\, :=\,
\overbrace{\theta_0\circ\cdots\circ\theta_0}^{
\ell\mbox{-}\rm{times}}\, =\, \text{Id}_{\gamma^*_L E}\, ,
$$
where $\theta_0$ is constructed in Eq. \eqref{b1}. Consequently,
the set of eigenvalues of $\theta_0(x)$ is contained
in $\mu_\ell$ (defined in Eq. \eqref{e04}); we noted earlier
that the set of eigenvalues of $\theta_0(x)$ along
with their multiplicities are independent of
$x\, \in\, Y_L$.

Since $Y_L$ is a principal $\mu_\ell$--bundle over $X$, the
Galois group $\text{Gal}(\gamma_L)$ is identified with
$\mu_\ell$. Note that $\text{Gal}(\gamma_L)$ has a natural
action on the vector bundle $\gamma^*_L E$ which is a lift of
the action
of $\text{Gal}(\gamma_L)$ on $Y_L$. Examining the construction
of $\theta_0$ (see Eq. \eqref{b1}) from $\theta'$, it follows
that the action of any
$$
\rho\, \in\, \text{Gal}(\gamma_L)\,=\, \mu_\ell
$$
on $\gamma^*_L E$ takes the eigenbundle
$E^\lambda$ (see Eq. \eqref{eb}) to the eigenbundle
$E^{\lambda\cdot\rho}$. This immediately implies that each
element of $\mu_\ell$ is an eigenvalue of $\theta_0(x)$,
and the multiplicities of the eigenvalues of $\theta_0(x)$
coincide. Hence, the multiplicity of each
eigenvalue of $\theta_0(x)$ is $r/\ell$.

Consider
\begin{equation}\label{E1}
E^1\, \longrightarrow\, Y_L\, ,
\end{equation}
which is the eigenbundle for the eigenvalue $1\, \in\, \mu_\ell$.
Define
$$
\widetilde{E}^1\, :=\, \bigoplus_{\rho\in \text{Gal}(\gamma_L)}
\rho^* E^1\, .
$$
There is a natural action of $\text{Gal}(\gamma_L)\,=\,
\mu_\ell$ on $\widetilde{E}^1$. Since the action of any $\rho
\, \in\,\mu_\ell$ on $\gamma^*_L E$ takes the eigenbundle
$E^\lambda$ to the eigenbundle
$E^{\lambda\cdot\rho}$, it follows immediately that
we have a $\text{Gal}(\gamma_L)$--equivariant identification
\begin{equation}\label{E2}
\gamma^*_L E\,=\, \widetilde{E}^1\, :=\,
\bigoplus_{\rho\in \text{Gal}(\gamma_L)}\rho^* E^1\, .
\end{equation}
In view of this $\text{Gal}(\gamma_L)$--equivariant
isomorphism we conclude that the composition
$$
E\, \longrightarrow\, \gamma_{L*}\gamma^*_L E
\,\stackrel{\sim}{\longrightarrow}\,
\gamma_{L*}\widetilde{E}^1\,=\,\gamma_{L*}
(\bigoplus_{\rho\in \text{Gal}(\gamma_L)}\rho^* E^1)
\,\longrightarrow\,\gamma_{L*}E^1
$$
is an isomorphism; here $$E\, \longrightarrow\,
\gamma_{L*}\gamma^*_L E$$ is the natural homomorphism
and $\bigoplus_{\rho\in \text{Gal}(\gamma_L)}\rho^* E^1
\,\longrightarrow\, E^1$ is the projection to the
direct summand corresponding to $\rho\,=\, 1$.

Since $E$ is stable, and $\gamma_{L*}E^1\,=\, E$, it follows
that $E^1$ is stable. Indeed, if a subbundle $F\,\subset\,
E^1$ violates the stability condition for $E^1$, then
the subbundle
$$
\gamma_{L*}F\,\subset\, \gamma_{L*}E^1\,=\, E
$$
violates the stability for $E$. Let
\begin{equation}\label{Phi}
\Phi\,:\, {\mathcal M}_\xi(r)^L\, \longrightarrow\,
\psi^{-1}(\xi)/\text{Gal}(\gamma_L)
\end{equation}
be the morphism that sends any $E$ to $E^1$. Since
$E$ is isomorphic to $\gamma_{L*}E^1$, it follows that
$$
\widehat{\gamma}\circ \Phi\, =\,
\text{Id}_{{\mathcal M}_\xi(r)^L}\, ,
$$
where $\widehat{\gamma}$ is constructed in Eq. \eqref{whg}.

Therefore, to complete the proof of the lemma it suffices
to show that for $F\, ,F'\, \in\, \psi^{-1}(\xi)$, if
\begin{equation}\label{P2}
\gamma_{L*}F\,=\,\gamma_{L*}F'\, ,
\end{equation}
then
\begin{equation}\label{c.}
F'\, =\, \tau^*F
\end{equation}
holds for some $\tau\, \in\,\text{Gal}(\gamma_L)$.

Take $F\, ,F'\, \in\, \psi^{-1}(\xi)$ such that Eq. \eqref{P2}
holds. Note that
\begin{equation}\label{c1.}
\bigoplus_{\tau,\eta\in \text{Gal}(\gamma_L)}
Hom(\eta^*F\, , \tau^* F')\, =\, \bigoplus_{\tau\in
\text{Gal}(\gamma_L)}Hom(F\, , \tau^*F')^{\oplus\ell}\,=\,
Hom(\gamma^*_L\gamma_{L*}F\, ,\gamma^*_L\gamma_{L*}F')\, .
\end{equation}
Since $\gamma_{L*}F\,=\,\gamma_{L*}F'$,
$$
H^0(X, \, Hom(\gamma_{L*}F\, ,
\gamma_{L*}F'))\, \not=\, 0\, .
$$
Consequently,
$$
H^0(Y_L, \, Hom(\gamma^*_L\gamma_{L*}F\, ,
\gamma^*_L\gamma_{L*}F'))\, \not=\, 0\, .
$$
Therefore, from Eq. \eqref{c1.} we conclude that there is
some $\tau\, \in\,\text{Gal}(\gamma_L)$ such that
\begin{equation}\label{c2.}
H^0(Y_L, \, Hom(F\, , \tau^*F'))\, \not=\, 0\, .
\end{equation}
Since both $F$ and $\tau^*F'$ are stable vector bundles
with
$$
\frac{\text{degree}(F)}{\text{rank}(F)}\, =\,
\frac{\text{degree}(\tau^*F')}{\text{rank}(\tau^*F')}\, ,
$$
{}from Eq. \eqref{c2.} we conclude that the vector bundle $F$ is
isomorphic to $\tau^*F'$. In other words, Eq. \eqref{c.} holds.
This completes the proof of the lemma.
\end{proof}

In Lemma \ref{lem-3}  we will show that the action of
${\rm Gal}(\gamma_L)$ on $\psi^{-1}(\xi)$ is free.

In the next section we will investigate the action of
the isotropy subgroups on the tangent bundle for the
action of $\Gamma$ of ${\mathcal M}_\xi(r)$.

\section{Action on the tangent bundle}\label{sec3}

Fix any $L\, \in\, \Gamma \setminus\,\{{\mathcal O}_X\}$.
Take any
$$
E\, \in\, {\mathcal M}_\xi(r)^L
$$
(see Eq. \eqref{e03}). Fix an isomorphism $\theta'$
as in Eq. \eqref{b0}. Let $s$ be the unique section of
$L^{\otimes\ell}$ such that the homomorphism
$(\theta')^\ell$ in Eq. \eqref{c.c} coincides with
$\text{Id}_E\bigotimes s$. As we noted earlier,
$s\, =\, \text{trace}((\theta')^\ell)/r$. Let
$$
\gamma_L\, :\, Y_L\, \longrightarrow\, X
$$
be the covering $Y_L$ constructed as in Eq. \eqref{e4} using
this section $s$. Recall that $\text{Gal}(\gamma_L)\,=\,
\mu_\ell$, where $\ell$ is the order of $L$.

Let
\begin{equation}\label{F}
F\, :=\, E^1\, \longrightarrow\, Y_L
\end{equation}
be the vector bundle constructed in Eq. \eqref{E1}.
The decomposition of $\gamma^*_L E$ in Eq. \eqref{E2} yields
the following decomposition of the pullback
$\gamma^*_L End(E)\,=\,End(\gamma^*_L E)$:
\begin{equation}\label{e9}
\gamma^*_L End(E)\, =\, \bigoplus_{t\in\text{Gal}(\gamma_L)}
\bigoplus_{u\in\text{Gal}(\gamma_L)}Hom(u^* F\, ,
(ut)^*F)\, =\,
\bigoplus_{t\in\text{Gal}(\gamma_L)}
\bigoplus_{u\in\text{Gal}(\gamma_L)} (u^* F)^\vee\bigotimes
(ut)^*F
\end{equation}
(see also Eq. \eqref{c1.}).

Note that for each $t\,\in\,\text{Gal}(\gamma_L)$, the
vector bundle
\begin{equation}\label{e10}
{\mathcal E}_t\, :=\, \bigoplus_{u\in\text{Gal}(\gamma_L)}
Hom(u^* F\, , (ut)^*F)\,\longrightarrow\, Y_L
\end{equation}
in Eq. \eqref{e9} is left invariant by the
natural action of $\text{Gal}
(\gamma_L)$ on the vector bundle $\gamma^*_L End(E)$. Therefore,
${\mathcal E}_t$ descends to $X$. Let
\begin{equation}\label{e11}
{\mathcal F}_t\, \longrightarrow\, X
\end{equation}
be the descent of ${\mathcal E}_t$. So
\begin{equation}\label{e1a}
{\mathcal F}_t\, =\, \gamma_{L*}Hom(F\, , t^* F)\, ,
\end{equation}
and
$$
\gamma^*_L {\mathcal F}_t\,=\, {\mathcal E}_t\, .
$$

The decomposition
$$
\gamma^*_L End(E)\, =\, \bigoplus_{t\in\text{Gal}(\gamma_L)}
{\mathcal E}_t
$$
in Eq. \eqref{e9} is preserved by the action of
$\text{Gal}(\gamma_L)$. Therefore, this decomposition
descends to the following decomposition:
\begin{equation}\label{e12}
End(E)\, =\, \bigoplus_{t\in\text{Gal}(\gamma_L)}
{\mathcal F}_t\, .
\end{equation}

We will now describe the differential
$d\phi_L(E)$, where $\phi_L$ is the automorphism
in Eq. \eqref{e2}, and $E\,\in\,{\mathcal M}_\xi(r)^L$.

Recall that we fixed an isomorphism
$\theta'\, :\, E\,\longrightarrow\, E\bigotimes L$
as in Eq. \eqref{b0}. This isomorphism $\theta'$
induces an isomorphism of the endomorphism bundle
$End(E)$ with $End(E\bigotimes L)$. Since
$$
End(E)\,=\, E\bigotimes E^\vee\,=\,
(E\bigotimes L)\bigotimes (E\bigotimes L)^\vee
\,=\, End(E\bigotimes L)\, ,
$$
the isomorphism of $End(E)$ with $End(E\bigotimes L)$
defined by $\theta'$ gives an automorphism of $End(E)$. Let
\begin{equation}\label{wth}
\widehat{\theta}\, :\, End(E)\,\longrightarrow\,
End(E)
\end{equation}
be this automorphism constructed from $\theta'$.
Since any two isomorphisms between $E$ and $E\bigotimes
L$ differ by a constant scalar, the automorphism
$\widehat{\theta}$ is independent of the choice of
$\theta'$.

Let
$$
{\rm ad}(E)\, \subset\, End(E)
$$
be the holomorphic subbundle of corank one given
by the sheaf of trace zero endomorphisms. Clearly,
$$
\widehat{\theta}({\rm ad}(E))\, \subset\,
{\rm ad}(E)\, .
$$
We note that $T_E{\mathcal M}_\xi(r)\,=\,
H^1(X,\, \text{ad}(E))$; here $T$ denotes the
holomorphic tangent bundle. Let
\begin{equation}\label{ot}
\overline{\theta}\, :\, H^1(X,\, \text{ad}(E))
\,\longrightarrow\,H^1(X,\, \text{ad}(E))
\end{equation}
be the automorphism induced by $\widehat{\theta}$.

{}From the construction of $\phi_L$ it follows that
the differential
\begin{equation}\label{dphi}
d\phi_L(E)\, :\, T_E{\mathcal M}_\xi(r)\,
\longrightarrow\,T_E{\mathcal M}_\xi(r)
\end{equation}
coincides with $\overline{\theta}$ constructed in Eq. \eqref{ot}.

In the proof of Lemma \ref{lem1} we observed
that the pullback to $Y_L$ of the isomorphism $\theta'$
coincides with the isomorphism
$$
\gamma^*_L E\, \longrightarrow\, \gamma^*_L E\bigotimes
\gamma^*_L  L
$$
obtained by tensoring with the tautological section of
$\gamma^*_L L$ (see Eq. \eqref{triv.}). Consider the
automorphism $\widehat{\theta}$ in Eq. \eqref{wth} induced
by $\theta'$. From the above description of $\theta'$
it follows immediately that $\widehat{\theta}$ acts
on the subbundle ${\mathcal E}_t$ (see Eq.
\eqref{e10}) as multiplication by
$$
t\, \in\, \mu_\ell\, \subset\, {\mathbb C}^*\, .
$$

\begin{lemma}\label{inc1}
Take any
\begin{equation}\label{t}
t\, \in\,\mu_\ell\setminus \{1\}\, .
\end{equation}
($\ell$ is the order of $L$; see Eq. \eqref{e04} for
$\mu_\ell$). Consider ${\mathcal F}_t$ constructed in Eq.
\eqref{e11} (see also Eq. \eqref{e12}). Then
\begin{equation}\label{in}
{\mathcal F}_t\, \subset\, {\rm ad}(E)\, .
\end{equation}
\end{lemma}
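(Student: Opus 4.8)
The plan is to exploit two facts: that $\widehat{\theta}$ acts on ${\mathcal F}_t$ as multiplication by the scalar $t\,\neq\, 1$, and that $\widehat{\theta}$ preserves the trace of an endomorphism. Because $r$ is invertible, there is a splitting $End(E)\,=\,{\rm ad}(E)\oplus {\mathcal O}_X\cdot\text{Id}_E$, so that ${\rm ad}(E)\,=\,\ker(\text{trace})$ for the trace homomorphism $\text{trace}\,:\,End(E)\,\longrightarrow\,{\mathcal O}_X$. Thus the desired inclusion ${\mathcal F}_t\,\subset\,{\rm ad}(E)$ reduces to showing that $\text{trace}$ vanishes identically on ${\mathcal F}_t$.

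The crux is the claim that $\widehat{\theta}$ is trace-preserving. Under the canonical identification $End(E)\,=\,End(E\bigotimes L)$ --- which comes from $L\bigotimes L^\vee\,=\,{\mathcal O}_X$, sends $A$ to $A\bigotimes\text{Id}_L$, and is therefore trace-preserving --- the automorphism $\widehat{\theta}$ in Eq. \eqref{wth} is realized as the conjugation $A\,\longmapsto\,\theta'\circ A\circ(\theta')^{-1}$ by the isomorphism $\theta'\,:\,E\,\longrightarrow\,E\bigotimes L$. Since conjugation by an isomorphism leaves the trace unchanged, we get $\text{trace}\circ\widehat{\theta}\,=\,\text{trace}$.

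Finally, recall from the discussion preceding the lemma that $\gamma^*_L\widehat{\theta}$ acts on ${\mathcal E}_t\,=\,\gamma^*_L{\mathcal F}_t$ as multiplication by $t$; as $\gamma_L$ is a covering, this descends to the statement that $\widehat{\theta}$ acts on ${\mathcal F}_t$ as multiplication by $t$. Hence for any local section $A$ of ${\mathcal F}_t$,
$$
\text{trace}(A)\,=\,\text{trace}(\widehat{\theta}(A))\,=\,\text{trace}(tA)\,=\,t\cdot\text{trace}(A)\, ,
$$
so that $(1-t)\,\text{trace}(A)\,=\,0$. Since $t\,\neq\, 1$, this forces $\text{trace}(A)\,=\,0$, and therefore ${\mathcal F}_t\,\subset\,\ker(\text{trace})\,=\,{\rm ad}(E)$. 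The only step requiring genuine care is the verification that $\widehat{\theta}$ preserves the trace; after that the conclusion is a one-line scalar computation, so that verification is the main (and fairly mild) obstacle.
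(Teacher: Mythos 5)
Your proof is correct, but it is genuinely different from the one in the paper. The paper never uses the eigenvalue property of $\widehat{\theta}$ inside the proof of this lemma; instead it argues via vanishing of sections and polystability: since $E$ is simple, $\dim H^0(X,\, End(E))\,=\,1$, and since ${\mathcal F}_1\,=\,\gamma_{L*}Hom(F,F)$ already carries the identity section, the decomposition $End(E)\,=\,\bigoplus_t {\mathcal F}_t$ forces $H^0(X,\,{\mathcal F}_t)\,=\,0$ for $t\,\neq\,1$; then, using that $End(E)$ is polystable of degree zero (via the Hermitian--Einstein connection), the direct summand ${\mathcal F}_t$ is polystable of degree zero, so a nonzero trace map ${\mathcal F}_t\,\longrightarrow\,{\mathcal O}_X$ would split off ${\mathcal O}_X$ as a direct summand and produce a nonzero section, a contradiction; hence the trace vanishes on ${\mathcal F}_t$. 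Your route replaces all of this with pure linear algebra: $\widehat{\theta}$ is trace-preserving (being conjugation by $\theta'$ composed with the canonical, trace-preserving identification $End(E\bigotimes L)\,=\,End(E)$), and it acts on ${\mathcal F}_t$ as multiplication by $t\,\neq\,1$ (this eigenbundle fact is established in the paper before the lemma, for ${\mathcal E}_t$ on $Y_L$, and your descent of it to ${\mathcal F}_t$ via the \'etale covering $\gamma_L$ is legitimate and involves no circularity), whence $(1-t)\,\mathrm{trace}\,=\,0$ on ${\mathcal F}_t$. Your argument is shorter and avoids both the section count and the Hermitian--Einstein/polystability input; what the paper's argument buys in exchange is the intermediate fact $H^0(X,\,{\mathcal F}_t)\,=\,0$ (its Eq.~\eqref{in1}) and the polystability of ${\mathcal F}_t$, which are of the same flavor as arguments reused later (e.g.\ in Lemma \ref{th.is.} and Lemma \ref{lem-3}), though the paper does re-derive the $H^0$ vanishing from \eqref{in} afterwards, so nothing downstream actually breaks if one substitutes your proof.
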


\begin{proof}
We first note that
$\dim H^0(X,\, End(E))\,=\,1$ because
$E$ is stable. On the other hand,
$$
\dim H^0(X,\,\gamma_{L*}Hom(F\, ,{\rm Id}^* F))\, \geq\, 1\, ,
$$
where ${\rm Id}\, =\, 1\, \in\, \text{Gal}(\gamma_L)\,=\,
\mu_\ell$ is the identity element. Therefore, from
Eq. \eqref{e1a} and Eq. \eqref{e12},
\begin{equation}\label{in1}
H^0(X,\, {\mathcal F}_t)\, =\, 0\, ,
\end{equation}
where $t$ is the element in Eq. \eqref{t}.

\begin{remark}\label{rem-st}
{\rm Since the vector bundle $E$ is stable, it admits a
unique Hermitian--Einstein connection. The connection on
$End(E)$ induced by a Hermitian--Einstein connection on $E$ is
also Hermitian--Einstein. Therefore, the vector bundle
$End(E)$ is polystable of degree zero.}$\hfill{\Box}$
\end{remark}

Continuing with the proof of Lemma \ref{inc1},
since $End(E)$ is polystable of degree zero, and ${\mathcal F}_t$
is a direct summand of $End(E)$ (see Eq. \eqref{e1a} and
Eq. \eqref{e12}), it follows that ${\mathcal F}_t$ is also
polystable of degree zero. Consider the trace map
$$
End(E)\, \supset\, {\mathcal F}_t\, \stackrel{\rm
trace}{\longrightarrow}\, {\mathcal O}_X\, .
$$
Since ${\mathcal F}_t$ is polystable of degree zero, if
the above trace homomorphism on ${\mathcal F}_t$ is nonzero, then
${\mathcal O}_X$ is a direct summand of ${\mathcal F}_t$.
If ${\mathcal O}_X$ is a direct summand of ${\mathcal F}_t$, then
Eq. \eqref{in1} is contradicted. Hence the trace map on
${\mathcal F}_t$ vanishes identically. This implies that Eq.
\eqref{in} holds. This completes the proof of the lemma.
\end{proof}

Consider the tautological trivialization of the line
bundle $\gamma^*_L L$ (see Eq. \eqref{triv.}). The action of
any element $t\, \in\,
\text{Gal}(\gamma_L)\, =\, \mu_\ell$ takes the
tautological section of $\gamma^*_L L$ to $t^{-1}$--times
the tautological section. Using this it follows immediately
that $t$ acts on ${\mathcal E}_t$ in Eq. \eqref{e10} as
multiplication by $t$. Now from the construction of
the automorphism $\widehat{\theta}$ in Eq. \eqref{wth} it
follows that $\widehat{\theta}$ acts on
${\mathcal F}_t$ as multiplication by $t$. In view of
this and Eq. \eqref{in}, we conclude that for all
$t\,\in\, \mu_\ell\setminus\{1\}$,
the automorphism $\overline{\theta}$
in Eq. \eqref{ot} acts on the subspace
\begin{equation}\label{su}
H^1(X,\, {\mathcal F}_t)\, \subset\, H^1(X,\,
\text{ad}(E))\,=\, T_E{\mathcal M}_\xi(r)
\end{equation}
as multiplication by $t$.

We will calculate the dimension of the subspace
$H^1(X,\, {\mathcal F}_t)$ in Eq. \eqref{su},
where $t\,\in\, \mu_\ell\setminus\{1\}$. Note that
$$
\text{rank}({\mathcal F}_t)\,=\, r^2/\ell\,\,~~\,\,
\text{~and~}\,\,~~\,\,\text{degree}({\mathcal F}_t)
\,=\, 0\, .
$$
Since $H^0(X,\, \text{ad}(E))\, =\, 0$, from
Eq. \eqref{in} we have
\begin{equation}\label{z}
H^0(X,\, {\mathcal F}_t)\, =\, 0\, .
\end{equation}
Therefore, the Riemann--Roch theorem says that
$$
\dim H^1(X,\, {\mathcal F}_t)\, =\,
r^2(g-1)/\ell\, .
$$

Hence, we have proved the following lemma:

\begin{lemma}\label{lem-a}
Take any $t\, \in\, \mu_\ell\setminus \{1\}$. Then
${\mathcal F}_t\, \subset\, {\rm ad}(E)$,
and the automorphism $\overline{\theta}$ in Eq.
\eqref{ot} acts on the subspace
$$
H^1(X,\, {\mathcal F}_t)\, \subset\, H^1(X,\,
{\rm ad}(E))\,=\, T_E{\mathcal M}_\xi(r)
$$
as multiplication by $t$. Also,
$$
\dim H^1(X,\, {\mathcal F}_t)\,= \,
r^2(g-1)/\ell\, .
$$
\end{lemma}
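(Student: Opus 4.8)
The plan is to establish the three assertions of the lemma in sequence, working from the eigenbundle decomposition $\gamma^*_L E \,=\, \bigoplus_{\rho} \rho^* F$ and the resulting splitting $End(E) \,=\, \bigoplus_{t\in\text{Gal}(\gamma_L)} {\mathcal F}_t$ of Eq. \eqref{e12}, together with the action of the automorphism $\widehat{\theta}$ of Eq. \eqref{wth} on these summands. The three statements are logically independent enough that I would prove them one at a time, feeding the inclusion \eqref{in} into both the eigenvalue claim and the dimension count.

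First I would dispose of the inclusion ${\mathcal F}_t \,\subset\, \text{ad}(E)$, which is exactly Lemma \ref{inc1}; I would reuse its argument. The point is that ${\mathcal F}_t$ is a direct summand of $End(E)$, which is polystable of degree zero by Remark \ref{rem-st}, so ${\mathcal F}_t$ is itself polystable of degree zero. If the restriction to ${\mathcal F}_t$ of the trace map $End(E) \,\to\, {\mathcal O}_X$ were nonzero, then ${\mathcal O}_X$ would split off as a direct summand of ${\mathcal F}_t$, forcing $H^0(X,\,{\mathcal F}_t) \,\neq\, 0$. But $\dim H^0(X,\,End(E)) \,=\, 1$ (as $E$ is simple) is already exhausted by the summand ${\mathcal F}_1 \,=\, \gamma_{L*}Hom(F,\,F)$, so $H^0(X,\,{\mathcal F}_t) \,=\, 0$ for $t \,\neq\, 1$, a contradiction. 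Hence the trace vanishes on ${\mathcal F}_t$ and \eqref{in} follows.

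Next, for the eigenvalue statement, I would track how $\widehat{\theta}$ acts on the summand ${\mathcal F}_t$. After pulling back to $Y_L$, the isomorphism $\theta'$ becomes tensoring with the tautological section of $\gamma^*_L L$, and an element $t \,\in\, \text{Gal}(\gamma_L)$ scales this section by $t^{-1}$; unwinding the definition \eqref{e9}--\eqref{e10} of ${\mathcal E}_t$ then shows that $\widehat{\theta}$ acts on ${\mathcal E}_t$, and hence on its descent ${\mathcal F}_t$, as multiplication by $t$. Since $\overline{\theta}$ on $H^1(X,\,\text{ad}(E))$ is by definition induced by $\widehat{\theta}$ (Eq. \eqref{ot}), functoriality of $H^1$ makes $\overline{\theta}$ act on the summand $H^1(X,\,{\mathcal F}_t)$ of \eqref{su} as multiplication by $t$.

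Finally, for the dimension I would compute the numerical invariants of ${\mathcal F}_t$. From \eqref{e1a}, ${\mathcal F}_t \,=\, \gamma_{L*}Hom(F,\,t^*F)$ with $F$ of rank $r/\ell$, so $\text{rank}({\mathcal F}_t) \,=\, \ell\cdot(r/\ell)^2 \,=\, r^2/\ell$, while $\deg({\mathcal F}_t) \,=\, 0$ since it is a degree-zero polystable summand. The inclusion \eqref{in} together with $H^0(X,\,\text{ad}(E)) \,=\, 0$ yields the vanishing \eqref{z}, and Riemann--Roch then gives $\dim H^1(X,\,{\mathcal F}_t) \,=\, -\chi({\mathcal F}_t) \,=\, \text{rank}({\mathcal F}_t)(g-1) \,=\, r^2(g-1)/\ell$. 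I expect the eigenvalue bookkeeping to be the only real obstacle: one must carefully separate the Galois action on $\gamma^*_L End(E)$ from the $\widehat{\theta}$-action and confirm that both, via the transformation law of the tautological section of $\gamma^*_L L$, conspire to give exactly the scalar $t$ on ${\mathcal F}_t$. The inclusion and the Riemann--Roch count are then essentially formal.
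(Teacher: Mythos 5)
Your proposal is correct and takes essentially the same route as the paper: the paper's proof of this lemma is exactly the combination of Lemma \ref{inc1} (polystability of $End(E)$ from Remark \ref{rem-st}, the trace map, and the vanishing $H^0(X,\,{\mathcal F}_t)=0$ forced by the summand ${\mathcal F}_1$), the observation that the pullback of $\theta'$ is tensoring by the tautological section of $\gamma^*_L L$ so that $\widehat{\theta}$ acts on ${\mathcal E}_t$, hence on its descent ${\mathcal F}_t$ and on $H^1(X,\,{\mathcal F}_t)$, as the scalar $t$, and the Riemann--Roch computation using ${\rm rank}({\mathcal F}_t)=r^2/\ell$, ${\rm degree}({\mathcal F}_t)=0$ and Eq. \eqref{z}. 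There is no gap; your reconstruction matches the paper's argument step for step.
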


Now we set $t\,=\,1\,\in\, \mu_\ell\,=\, \text{Gal}(\gamma_L)$. The
automorphism $\widehat{\theta}$ of $End(E)$ acts trivially
on the subbundle ${\mathcal F}_1\,\subset \, End(E)$. Therefore,
$\overline{\theta}$ acts trivially on the subspace
$$
H^1(X,\, {\mathcal F}_1)\bigcap H^1(X,\, \text{ad}(E))
\, \subset\, H^1(X,\, \text{ad}(E))\,=\,
T_E{\mathcal M}_\xi(r)\, .
$$
{}From Eq. \eqref{e12} and Eq. \eqref{in},
$$
\text{ad}(E)\, =\, ({\mathcal F}_1\bigcap \text{ad}(E))
\bigoplus (\bigoplus_{\tau\in\mu_\ell\setminus\{1\}}
{\mathcal F}_\tau)\, .
$$
Consequently, the dimension of the subspace of
$H^1(X,\, \text{ad}(E))$ on which $\overline{\theta}$ in
Eq. \eqref{ot} acts as the identity map is
$$
\dim H^1(X,\, \text{ad}(E))
-\sum_{t\in\mu_\ell\setminus \{1\}}\dim H^1(X,\, {\mathcal F}_t) $$
$$
=\,(r^2-1)(g-1)- \frac{(\ell-1)r^2(g-1)}{\ell}\,=\,
\frac{r^2(g-1)}{\ell}-g+1\, .
$$

Combining this with Lemma \ref{lem-a} we get the
following proposition.

\begin{proposition}\label{prop1}
The eigenvalues of the differential
$$
d\phi_L(E)\, :\, T_E{\mathcal M}_\xi(r)\,
\longrightarrow\,T_E{\mathcal M}_\xi(r)
$$
are $\mu_\ell$. For any $t\, \in\,
\mu_\ell\setminus\{1\}$, the multiplicity
of the eigenvalue $t$ is $r^2(g-1)/\ell$.
The multiplicity of the eigenvalue $1$ is
$1-g+r^2(g-1)/\ell$.
\end{proposition}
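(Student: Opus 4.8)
The plan is to obtain the eigenvalue decomposition of $d\phi_L(E)$ directly from the $\mu_\ell$-graded decomposition of $\text{ad}(E)$ established above, so that essentially no new computation is needed. First I would recall, from the discussion surrounding Eq.~\eqref{dphi} and Eq.~\eqref{ot}, that under the identification $T_E{\mathcal M}_\xi(r)=H^1(X,\,\text{ad}(E))$ the differential $d\phi_L(E)$ is exactly the automorphism $\overline{\theta}$. Thus it suffices to diagonalize $\overline{\theta}$ and record the dimensions of its eigenspaces.

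Next I would invoke the decomposition
$$
\text{ad}(E)\,=\,(\mathcal F_1\cap\text{ad}(E))\,\oplus\,\bigoplus_{t\in\mu_\ell\setminus\{1\}}\mathcal F_t\, ,
$$
which comes from Eq.~\eqref{e12} together with the containment $\mathcal F_t\subset\text{ad}(E)$ for $t\neq1$ proved in Lemma~\ref{inc1}. Applying the additive functor $H^1(X,-)$ gives
$$
H^1(X,\,\text{ad}(E))\,=\,H^1(X,\,\mathcal F_1\cap\text{ad}(E))\,\oplus\,\bigoplus_{t\in\mu_\ell\setminus\{1\}}H^1(X,\,\mathcal F_t)\, .
$$
By Lemma~\ref{lem-a} the operator $\overline{\theta}$ acts on each summand $H^1(X,\,\mathcal F_t)$ with $t\neq1$ as the scalar $t$, while by the computation carried out just before the proposition it acts as the identity on $H^1(X,\,\mathcal F_1\cap\text{ad}(E))$. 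Since the scalars $t$, as $t$ ranges over $\mu_\ell$, are pairwise distinct, this displayed splitting is automatically the eigenspace decomposition of $\overline{\theta}$; in particular every eigenvalue lies in $\mu_\ell$, the eigenspace for $t\neq1$ is $H^1(X,\,\mathcal F_t)$, and the eigenspace for $1$ is $H^1(X,\,\mathcal F_1\cap\text{ad}(E))$.

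For the multiplicities I would simply read off the dimensions already established: Lemma~\ref{lem-a} yields $\dim H^1(X,\,\mathcal F_t)=r^2(g-1)/\ell$ for each $t\in\mu_\ell\setminus\{1\}$, and the Euler-characteristic count preceding the proposition yields $\dim H^1(X,\,\mathcal F_1\cap\text{ad}(E))=r^2(g-1)/\ell-g+1$. The one step that genuinely requires argument, rather than quotation, is verifying that \emph{every} element of $\mu_\ell$ actually occurs, i.e.\ that all these multiplicities are strictly positive, so that the eigenvalue set is precisely $\mu_\ell$. For $t\neq1$ positivity is immediate since $g\geq2$. For $t=1$ I would use that $\ell$ divides $r$, whence $r^2/\ell\geq r\geq2$ and therefore $r^2(g-1)/\ell-(g-1)=(g-1)\bigl(r^2/\ell-1\bigr)>0$, making the multiplicity $1-g+r^2(g-1)/\ell$ positive as well. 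Assembling these observations gives the proposition.
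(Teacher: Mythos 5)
Your proposal is correct and takes essentially the same route as the paper: the paper also obtains the proposition by combining the decomposition $\text{ad}(E)\,=\,({\mathcal F}_1\bigcap \text{ad}(E))\bigoplus\bigl(\bigoplus_{t\in\mu_\ell\setminus\{1\}}{\mathcal F}_t\bigr)$ with Lemma \ref{lem-a} (for the eigenvalues $t\neq 1$) and the dimension count $(r^2-1)(g-1)-(\ell-1)r^2(g-1)/\ell$ for the identity eigenspace. Your extra verification that every multiplicity is strictly positive, so that the eigenvalue set is exactly $\mu_\ell$ rather than merely contained in it, is a point the paper leaves implicit, and your argument for it (using $g\geq 2$ and $r^2/\ell\geq r\geq 2$) is correct.
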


\begin{lemma}\label{lem-3}
The action of ${\rm Gal}(\gamma_L)$ on
$\psi^{-1}(\xi)$ in Lemma \ref{lem1} is free.
\end{lemma}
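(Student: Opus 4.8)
The plan is to argue by contradiction. Suppose the action of $\text{Gal}(\gamma_L)\,=\,\mu_\ell$ on $\psi^{-1}(\xi)$ is not free. Then some $F\,\in\,\psi^{-1}(\xi)$ has a nontrivial stabilizer. Since $\mu_\ell$ is cyclic, this stabilizer, being a nontrivial subgroup, contains an element $\tau$ of some prime order $p$ (so $p\,\mid\,\ell$). Because $\ell\,\mid\,r$, we have $p\,\mid\,r$. The subgroup $\langle\tau\rangle\,=\,\mu_p\,\subset\,\mu_\ell$ acts freely on $Y_L$ (recall that $Y_L$ is a principal $\mu_\ell$--bundle over $X$), so the quotient $\pi\,:\,Y_L\,\longrightarrow\, Z\,:=\,Y_L/\mu_p$ is an \'etale Galois covering of degree $p$ with Galois group $\mu_p$. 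The idea is to show that the isomorphism $\tau^*F\,\cong\,F$ forces $F$ to descend to $Z$, and then to derive a numerical contradiction from the coprimality of $r$ and $d$.

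First I would promote the given isomorphism to a descent datum. Fix an isomorphism $\sigma\,:\,\tau^*F\,\longrightarrow\,F$; such $\sigma$ exists since $\tau$ lies in the stabilizer of $F$. The composite $\sigma\circ\tau^*\sigma\circ\cdots\circ(\tau^{p-1})^*\sigma\,:\,(\tau^p)^*F\,=\,F\,\longrightarrow\,F$ is an automorphism of $F$, hence a nonzero scalar $c$, because $F$ is stable and therefore simple. Replacing $\sigma$ by $\lambda\sigma$ with $\lambda^p\,=\,c^{-1}$ makes this $p$--fold composite equal to $\text{Id}_F$. For the cyclic group $\mu_p\,=\,\langle\tau\rangle$, this single relation is exactly the cocycle condition (the intermediate compatibilities $\phi_{\tau^{i+j}}\,=\,\phi_{\tau^i}\circ(\tau^i)^*\phi_{\tau^j}$ hold by construction of the iterated composites $\phi_{\tau^k}$), so $\sigma$ defines a $\mu_p$--equivariant structure on $F$ relative to $\pi$. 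By Galois descent along the finite \'etale cover $\pi$, there is a vector bundle $G\,\longrightarrow\, Z$ with $\pi^*G\,\cong\,F$.

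It then remains to extract the contradiction. By the definition of $\mathcal{N}^{Y_L}(d)$ we have $\text{degree}(F)\,=\,d$. Since $\pi$ is \'etale of degree $p$, pulling back multiplies degree by $p$, so $d\,=\,\text{degree}(F)\,=\,p\cdot\text{degree}(G)$, whence $p\,\mid\,d$. Combined with $p\,\mid\,r$ established above, this gives $\gcd(r,\,d)\,\geq\,p\,>\,1$, contradicting the standing hypothesis that $d$ is coprime to $r$. Hence no nontrivial element of $\mu_\ell$ can stabilize a point of $\psi^{-1}(\xi)$, and the action is free. I expect the main subtlety to be the descent step: a priori the isomorphism $\tau^*F\,\cong\,F$ need not assemble into a descent datum, and the possible scalar obstruction is precisely what could fail. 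The simplicity of the stable bundle $F$ is what lets me normalize $\sigma$ to kill this obstruction, which is the crux of the argument; the degree computation is then routine.
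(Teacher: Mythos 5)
Your proof is correct, but it follows a genuinely different route from the paper's. The paper deduces Lemma \ref{lem-3} in one stroke from the vanishing statement \eqref{z}: for $E\,=\,\gamma_{L*}F\,\in\,{\mathcal M}_\xi(r)^L$ one has the decomposition $End(E)\,=\,\bigoplus_t {\mathcal F}_t$ of Eq.~\eqref{e12} with ${\mathcal F}_t\,=\,\gamma_{L*}Hom(F\, ,t^*F)$ (Eq.~\eqref{e1a}); since $E$ is stable, $\dim H^0(X,\,End(E))\,=\,1$, and this one--dimensional space is already accounted for by the summand ${\mathcal F}_1$, so $H^0(Y_L,\,Hom(F\, ,t^*F))\,=\,H^0(X,\,{\mathcal F}_t)\,=\,0$ for $t\,\neq\,1$, i.e., $F\,\not\cong\,t^*F$. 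You argue instead by contradiction via Galois descent: a nontrivial stabilizer contains an element $\tau$ of prime order $p$ with $p\,\mid\,r$; simplicity of $F$ lets you rescale $\sigma\,:\,\tau^*F\,\longrightarrow\,F$ so that its $p$--fold twisted composite is the identity, turning $\sigma$ into a descent datum for the degree--$p$ \'etale cover $\pi\,:\,Y_L\,\longrightarrow\,Y_L/\mu_p$; descent gives $F\,\cong\,\pi^*G$, whence $d\,=\,\deg F\,=\,p\deg G$, so $p$ divides both $r$ and $d$, contradicting coprimality. Both arguments ultimately rest on the simplicity of a stable bundle, but they use it in different places: the paper uses simplicity of the direct image $E\,=\,\gamma_{L*}F$ --- and this is where the hypothesis $\gcd(r,d)\,=\,1$ enters its proof, since coprimality is what guarantees (in Lemma \ref{lem1}) that $\gamma_{L*}F$ is stable rather than merely semistable --- whereas you use simplicity of $F$ itself and let coprimality do its work explicitly in the degree count. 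Your argument is more self--contained (it bypasses the ${\mathcal F}_t$--decomposition and the polystability of $End(E)$ from Remark \ref{rem-st}) and it proves slightly more, namely that ${\rm Gal}(\gamma_L)$ acts freely on all of ${\mathcal N}^{Y_L}(d)$, not only on $\psi^{-1}(\xi)$; the paper's route, on the other hand, costs nothing in context, because the decomposition and the vanishing \eqref{z} are needed anyway for the eigenvalue computation in Proposition \ref{prop1}.
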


\begin{proof}
In Eq. \eqref{z} we saw that
$H^0(X,\, {\mathcal F}_t)\, =\, 0$
for all $t\,\in\,\text{Gal}(\gamma_L)\setminus\{1\}$.
Now, from Eq. \eqref{e1a} it follows that
$$
H^0(X,\, {\mathcal F}_t)\,=\,
H^0(Y_L,\, Hom(F\, , t^* F))\,=\,0
$$
for all $t\,\in\,\text{Gal}(\gamma_L)\setminus\{1\}$.
Since $F\,=\, E^1$ (see Eq. \eqref{F}), this implies
that for each $t\,\in\,\text{Gal}(\gamma_L)\setminus
\{1\}$, the vector bundle $E^1$ is not isomorphic to
$t^*E^1$. This completes the proof of the lemma.
\end{proof}

\section{Intersection of fixed point sets}\label{sec4}

Henceforth, we will always assume that $r$ is a prime
number. As before, $d\,=\,\text{degree}(\xi)$ is
assumed to be coprime to $r$.

We note that the group $\Gamma$
(see Eq. \eqref{e1}) is a vector space over the
field ${\mathbb Z}/r{\mathbb Z}$. For any $J\, ,L\, \in\,
\Gamma$, and any $n\, \in\, {\mathbb Z}/r{\mathbb Z}$,
$$
J+L\, :=\, J\bigotimes L~\,~\,~\, \text{~and~}
~\,~\,~\,  nL\, :=\, L^{\otimes n}\, .
$$
For a line bundle $L_0$ over $X$, by $L^{\otimes 0}_0$ we
will denote the trivial line bundle ${\mathcal O}_X$.

Take two linearly independent elements
\begin{equation}\label{e13}
J\, , L\, \in\, \Gamma\, .
\end{equation}
We note that the line bundles $J^{\otimes i}
\bigotimes L^{\otimes j}$, $i\, ,j\, \in\, [1\, ,r-1]$,
are all distinct and nontrivial.

Take any
$$
E\, \in\, {\mathcal M}_\xi(r)^J\bigcap {\mathcal M}_\xi(r)^L
$$
(see Eq. \eqref{e03}), where $J$ and $L$ are, as in
Eq. \eqref{e13}, linearly independent. Fix isomorphisms
\begin{equation}\label{e14}
\theta_1\, :\, E\, \longrightarrow\, E\bigotimes J
~\,\,~ \text{~and~}~\,\,~ \theta_2\, :\, E\, \longrightarrow\,
E\bigotimes L\, .
\end{equation}
The isomorphism $\theta_1$ (respectively, $\theta_2$) gives
an inclusion of the line bundle $J^\vee\,=\,
J^{\otimes (r-1)}$ (respectively, $L^\vee\,=\,
L^{\otimes (r-1)}$) in
$End(E)\, =\, E\bigotimes E^\vee$. Note that since
$r$ is a prime number, any $J^{\otimes i}$ (respectively,
$L^{\otimes i}$) is a tensor power of $J^\vee$
(respectively, $L^\vee$). Using the associative algebra
structure of the fibers of $End(E)$ defined by
composition of homomorphisms, the above homomorphisms
$$
J^\vee\, \longrightarrow\, End(E)\,~\,~\, \text{~and~}
L^\vee\, \longrightarrow\, End(E)
$$
give an inclusion of $(J^\vee)^{\otimes i}\bigotimes
(L^\vee)^{\otimes j}$ in $End(E)$ for all $i\, , j\, \geq\,
0$. Therefore, we get an inclusion of $J^{\otimes
i}\bigotimes
L^{\otimes j}$ in $End(E)$ for all $i\, , j\, \geq\, 0$.
The line bundle
$J^{\otimes 0}\bigotimes L^{\otimes 0}\, =\, {\mathcal O}_X$
sits inside $End(E)$ as scalar multiplications.

Let
\begin{equation}\label{e15}
\Theta\, :\, \bigoplus_{i,j=0}^{r-1} J^{\otimes i}\bigotimes
L^{\otimes j}\, \longrightarrow\, End(E)
\end{equation}
be the homomorphism constructed as above.

\begin{lemma}\label{th.is.}
The homomorphism $\Theta$ in Eq. \eqref{e15} is a holomorphic
isomorphism of vector bundles.
\end{lemma}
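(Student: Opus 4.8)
The plan is to show that $\Theta$ is injective as a homomorphism of sheaves and then upgrade injectivity to an isomorphism by a determinant argument. Both the domain $\bigoplus_{i,j}J^{\otimes i}\bigotimes L^{\otimes j}$ and the codomain $\text{End}(E)$ are vector bundles of rank $r^2$, and each has degree zero: the domain has degree $\sum_{i,j}(i\,\text{degree}(J)+j\,\text{degree}(L))\,=\,0$ because $J,L\,\in\,{\rm Pic}^0(X)$, while $\text{degree}(\text{End}(E))\,=\,0$. Therefore $\det\Theta$ is a section of the line bundle $\text{Hom}(\det(\bigoplus_{i,j}J^{\otimes i}\bigotimes L^{\otimes j}),\,\det\text{End}(E))$, whose degree is zero. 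Once $\Theta$ is known to be generically injective, $\det\Theta$ is a nonzero section of a degree zero line bundle on the curve $X$, hence nowhere vanishing, and so $\Theta$ is an isomorphism. Thus everything reduces to checking that the $r^2$ line subbundles $\Theta(J^{\otimes i}\bigotimes L^{\otimes j})$ of $\text{End}(E)$ are independent at a general point.

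For this I would exploit the polystability of $\text{End}(E)$. By Remark \ref{rem-st} the bundle $\text{End}(E)$ is polystable of degree zero, so it decomposes as a direct sum of stable bundles of slope zero, which I group into isotypic components by isomorphism class. Each line bundle $M_{ij}\,:=\,J^{\otimes i}\bigotimes L^{\otimes j}$ is stable of slope zero. I first note that the restriction of $\Theta$ to the summand $M_{ij}$ is nowhere zero: locally, after trivializing the line bundles, it is a composite of the fiberwise isomorphisms induced by $\theta_1$ and $\theta_2$ (see Eq. \eqref{e14}), so its values are invertible endomorphisms of the fibers of $E$. Hence $\Theta|_{M_{ij}}$ is a subbundle inclusion. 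Since a nonzero homomorphism from a stable bundle of slope zero into a polystable bundle of slope zero has image a subbundle isomorphic to the source, and since $\text{Hom}$ between non-isomorphic stable bundles of equal slope vanishes, the image $\Theta(M_{ij})$ is forced to lie inside the $M_{ij}$--isotypic component of $\text{End}(E)$.

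Here is where the linear independence of $J$ and $L$ enters. As recorded just after Eq. \eqref{e13}, the line bundles $M_{ij}$, for $(i,j)\,\in\,[0,r-1]\times[0,r-1]$, are pairwise distinct. Consequently the assignment sending $(i,j)$ to the isotypic component of $\text{End}(E)$ containing $\Theta(M_{ij})$ is injective: distinct $(i,j)$ yield non-isomorphic $M_{ij}$, hence land in distinct isotypic components. Distinct isotypic components of $\text{End}(E)$ intersect only in the zero subsheaf, so the images $\Theta(M_{ij})$ are in direct sum, and therefore $\Theta$ is injective. Combined with the determinant argument of the first paragraph, this gives that $\Theta$ is an isomorphism of vector bundles.

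I expect the main obstacle to be the independence assertion of the third paragraph, namely that the $r^2$ line subbundles occupy distinct isotypic components. This is precisely the step in which the polystability of $\text{End}(E)$, the vanishing of $\text{Hom}$ between distinct stable slope-zero bundles, and the linear independence of $J$ and $L$ (which guarantees that the $M_{ij}$ are pairwise non-isomorphic) must all be used together. By contrast, the determinant step and the fiberwise nonvanishing of each restricted map $\Theta|_{M_{ij}}$ are comparatively routine.
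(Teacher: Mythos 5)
Your proof is correct, and it rests on exactly the same two pillars as the paper's own argument: the polystability (of degree zero) of $End(E)$ from Remark \ref{rem-st}, and the vanishing of homomorphisms between non-isomorphic line bundles of degree zero, applied to the pairwise distinct bundles $M_{ij}\,:=\,J^{\otimes i}\bigotimes L^{\otimes j}$. The difference is organizational rather than conceptual. The paper runs the mechanism inductively: assuming $\Theta$ is injective on a partial sum $W_S$, polystability splits $End(E)\,=\,\Theta(W_S)\bigoplus W'_S$, the vanishing of $H^0(X,\, Hom(J^{\otimes i_0}\bigotimes L^{\otimes j_0},\, W_S))$ forces the next line bundle into the complement $W'_S$, and at the end pointwise injectivity together with equality of ranks finishes the proof. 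You instead invoke the isotypic decomposition of the polystable bundle $End(E)$ once and for all, placing each image $\Theta(M_{ij})$ in its own isotypic component, and you close with a determinant argument (generic injectivity plus equal degree zero of domain and codomain forces $\det\Theta$ to be nowhere vanishing). Your packaging buys a shorter, induction-free argument; the paper's buys self-containedness, constructing the needed direct-sum complements by hand instead of appealing to the isotypic decomposition of a polystable bundle. One small remark: your determinant step is not actually needed. Since each restriction $\Theta|_{M_{ij}}$ is fiberwise nonzero (as you observe) and the fibers of distinct isotypic components are independent subspaces of each fiber of $End(E)$, your argument already yields pointwise injectivity of $\Theta$, and then equality of the ranks ($r^2$ on both sides) gives the isomorphism directly, exactly as in the paper's conclusion.
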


\begin{proof}
Take any proper subset
$$
S\, \subset\, [0\, ,r-1]\times [0\, ,r-1]\, ,
$$
and also take any $(i_0\, ,j_0)\,\in\, [0\, ,r-1]\times
[0\, ,r-1]\setminus S$ in the complement. Assume that the
restriction of the homomorphism $\Theta$ in Eq. \eqref{e15} to
$$
W_S\, :=\, \bigoplus_{(i,j)\in S} J^{\otimes i}\bigotimes
L^{\otimes j}
$$
is injective.
Note that $W_S$ is polystable of degree zero, and
in Remark \ref{rem-st} we observed that the vector bundle $End(E)$
is polystable.
Since both $W_S$ and $End(E)$ are polystable vector bundles of
degree zero, it follows that $\Theta(W_S)$ is a subbundle of
$End(E)$. Furthermore, there is a holomorphic subbundle
$$
W'_S\, \subset\, End(E)
$$
which is a direct summand of $\Theta(W_S)$. In particular,
\begin{equation}\label{ds}
End(E)\,=\, \Theta(W_S)\bigoplus W'_S\, .
\end{equation}
We have
$$
H^0(X,\, Hom(J^{\otimes i_0}\bigotimes
L^{\otimes j_0}\, , W_S))\, =\, 0
$$
because $J^{\otimes i_0}\bigotimes
L^{\otimes j_0}$ is distinct from
$J^{\otimes i}\bigotimes L^{\otimes j}$
for all $(i\, ,j)\,\in\, S$. Therefore, the
projection of $J^{\otimes i_0}\bigotimes
L^{\otimes j_0}$ to the direct summand
$\Theta(W_S)\, \subset\, End(E)$ in Eq. \eqref{ds}
vanishes identically. This implies that
$$
\Theta(J^{\otimes i_0}\bigotimes L^{\otimes j_0})\,
\subset\, W'_S\, .
$$
Hence $\Theta$ makes $W_S\bigoplus (J^{\otimes i_0}\bigotimes
L^{\otimes j_0})$ a subbundle of $End(E)$.

Now, using
induction we conclude that $\Theta$
in Eq. \eqref{e15} is a pointwise injective
homomorphism of vector bundles. Since
$$
r^2\, =\, \text{rank}(End(E))\, =\,
\text{rank}(\bigoplus_{i,j=0}^{r-1} J^{\otimes i}\bigotimes
L^{\otimes j})\, ,
$$
it follows that $\Theta$ is an isomorphism of vector bundles.
\end{proof}

Take two vector bundles $E\, ,F\,\in\, {\mathcal M}_\xi(r)$. If
the vector bundle $End(E)$ is isomorphic to $End(F)$, then
at least one of the following two statements is valid:
\begin{enumerate}
\item There is a line bundle $\zeta\, \longrightarrow\, X$ such
that
\begin{equation}\label{det0}
E\, =\, F\bigotimes\zeta\, .
\end{equation}

\item There is a line bundle $\zeta\, \longrightarrow\,
X$ such that
\begin{equation}\label{det00}
E\, =\, F^\vee\bigotimes\zeta\, .
\end{equation}
\end{enumerate}

Since $\bigwedge^r E\, =\,\bigwedge^r F$, taking the $r$--th
exterior power of both sides of Eq. \eqref{det0} it follows
that $\zeta^{\otimes r}\,=\, {\mathcal O}_X$.

Similarly, since $\bigwedge^r E\, =\,\bigwedge^r F\,=\,
\xi$, taking the $r$--th
exterior power of both sides of Eq. \eqref{det00} it follows
that $\zeta^{\otimes r}\,=\, \xi^{\otimes 2}$. Recall that
$r$ is a prime number, and $\text{degree}(\xi)$ is not
a multiple of $r$. Hence, if Eq. \eqref{det00} holds, then
$r\, =\,2$.

Therefore, Lemma \ref{th.is.} has the following corollary:

\begin{corollary}\label{cor1}
Take two linearly independent elements
$J\, , L\, \in\, \Gamma$. If
$$
E\, , F\, \in\, {\mathcal M}_\xi(r)^J\bigcap {\mathcal M}_\xi
(r)^L\, ,
$$
then at least one of the following two statements is valid:
\begin{enumerate}
\item There is a line
bundle $\zeta\, \in\,\Gamma$ such
that $F\bigotimes\zeta$ is holomorphically isomorphic to $E$.

\item There is a line
bundle $\zeta\, \in\,\Gamma$ such
that $F^\vee\bigotimes\zeta$ is holomorphically isomorphic
to $E$.
\end{enumerate}
If the second statement holds, then $r\,=\, 2$.
\end{corollary}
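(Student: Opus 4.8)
The plan is to reduce the corollary to the endomorphism-bundle dichotomy recorded just above its statement, using Lemma \ref{th.is.} as the bridge. The starting observation is that Lemma \ref{th.is.} applies verbatim to each of $E$ and $F$: both lie in ${\mathcal M}_\xi(r)^J\cap {\mathcal M}_\xi(r)^L$, and $J,L$ are linearly independent, so the homomorphism $\Theta$ of Eq. \eqref{e15} is an isomorphism for $E$ and, separately, for $F$. Since one and the same pair $(J,L)$ governs both bundles, I obtain
\[
End(E)\;\cong\;\bigoplus_{i,j=0}^{r-1}J^{\otimes i}\otimes L^{\otimes j}\;\cong\;End(F),
\]
and in particular $End(E)\cong End(F)$. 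This is the only place where the hypothesis $E,F\in{\mathcal M}_\xi(r)^J\cap {\mathcal M}_\xi(r)^L$ is used.

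With $End(E)\cong End(F)$ in hand, I would invoke the dichotomy stated immediately before the corollary: there is a line bundle $\zeta\,\longrightarrow\,X$ with either $E\cong F\otimes\zeta$ (Eq. \eqref{det0}) or $E\cong F^\vee\otimes\zeta$ (Eq. \eqref{det00}). It then remains only to track determinants, a computation already carried out in the text. In the first case, applying $\bigwedge^r$ to $E\cong F\otimes\zeta$ and using $\det E=\det F=\xi$ gives $\zeta^{\otimes r}={\mathcal O}_X$, so $\zeta\in\Gamma$; this is conclusion $(1)$. In the second case the same computation gives $\zeta^{\otimes r}=\xi^{\otimes 2}$, whence $r\cdot\text{degree}(\zeta)=2\,\text{degree}(\xi)$; since $r$ is prime and coprime to $\text{degree}(\xi)$ we get $r\mid 2$, forcing $r=2$, which yields conclusion $(2)$ together with the final assertion.

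The genuinely substantive input is the dichotomy that $End(E)\cong End(F)$ forces $E$ to be a line-bundle twist of $F$ or of $F^\vee$; as it is stated as known in the excerpt, within the proof of the corollary I simply cite it, and the corollary becomes pure assembly (apply Lemma \ref{th.is.} twice, feed the isomorphism $End(E)\cong End(F)$ into the dichotomy, read off the determinant constraints). Were the dichotomy not already available, establishing it would be the main obstacle, and I would expect to argue through the associated $\text{PGL}(r,{\mathbb C})$--bundle: the bundle $End(E)$ remembers $E$ only up to the duality $E\leftrightarrow E^\vee$, since $End(E^\vee)\cong End(E)^\vee\cong End(E)$, and for stable $E,F$ a projective isomorphism of $E$ with $F$ or with $F^\vee$ is precisely a twist by a line bundle.
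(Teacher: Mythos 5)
Your proposal is correct and is essentially the paper's own argument: the paper presents the corollary as an immediate consequence of Lemma \ref{th.is.} applied to both $E$ and $F$ (giving $End(E)\cong End(F)$), the endomorphism-bundle dichotomy, and the determinant computations carried out in the paragraph immediately preceding the corollary's statement. The one imprecision --- that in the second case the dichotomy's $\zeta$ satisfies $\zeta^{\otimes r}=\xi^{\otimes 2}$ and so is not literally an element of $\Gamma$ as statement (2) demands, which for $r=2$ requires Remark \ref{rem-1} to convert the dual twist into a twist by an element of $\Gamma$ --- is inherited from the paper itself, so it is not a gap in your argument relative to the source.
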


\begin{remark}\label{rem1}
{\rm Take a line bundle $L\,\in\,\Gamma$, and also take
a vector bundle $E\,\in\, {\mathcal M}_\xi(r)^L$. Let
$$
f_0\, :\, E\, \longrightarrow\, E\bigotimes L
$$
be a holomorphic isomorphism.}
\begin{itemize}
\item {\rm Take any $\zeta\, \in\, \Gamma$.
Then
$$
f_0\otimes {\rm Id}_\zeta\, :\,
E\bigotimes \zeta\, \longrightarrow\, E\bigotimes L\bigotimes\zeta
\,=\, E\bigotimes \zeta\bigotimes L
$$
is also an isomorphism. Hence $E\bigotimes \zeta\, \in\,
{\mathcal M}_\xi (r)^L$.
Therefore, if
$$
E\, \, \in\, {\mathcal M}_\xi(r)^J\bigcap {\mathcal M}_\xi
(r)^L\, ,
$$
and $\zeta\, \in\, \Gamma$, then
$$
E\bigotimes \zeta\, \, \in\, {\mathcal M}_\xi(r)^J\bigcap
{\mathcal M}_\xi (r)^L\, .
$$}

\item {\rm Consider the isomorphism
\begin{equation}\label{1e}
f^\vee_0\, :\,
E^\vee\bigotimes L^\vee\, =\, (E\bigotimes L)^\vee\,
\longrightarrow\, E^\vee\, .
\end{equation}
For any holomorphic line bundle
$\eta\, \longrightarrow\, X$, tensoring the isomorphism
in Eq. \eqref{1e} by
$$
\text{Id}_{L\otimes\eta}\, :\, L\bigotimes\eta\,
\longrightarrow\,L\bigotimes\eta
$$
we get an isomorphism
$$
E^\vee\bigotimes\eta\, \longrightarrow\,
E^\vee\bigotimes L \bigotimes \eta\,=\,
E^\vee\bigotimes\eta \bigotimes L\, .
$$
Therefore, if $\bigwedge^r (E^\vee\bigotimes\eta)\,=\,
\xi$, then $E^\vee\bigotimes\eta\, \in\,
{\mathcal M}_\xi (r)^L$. Now using the first part
of the remark we conclude the following: if
$$
E\, \, \in\, {\mathcal M}_\xi(r)^J\bigcap {\mathcal M}_\xi
(r)^L\, ,
$$
and $\bigwedge^r (E^\vee\bigotimes\eta)\,=\,
\xi$, then
$$
E^\vee\bigotimes\eta\bigotimes \zeta\, \, \in\, {\mathcal
M}_\xi(r)^J\bigcap
{\mathcal M}_\xi (r)^L
$$
for all $\zeta\, \in\, \Gamma$.}
\end{itemize}$\hfill{\Box}$
\end{remark}

\begin{remark}\label{rem-1}
{\rm Assume that $r\,=\,2$. Take any
$E\,\in\,{\mathcal M}_\xi(2)$. Since
$\bigwedge^2 E\,=\, \xi$, contracting
both sides by $E^\vee$ it follows that 
$E\,=\, E^\vee\bigotimes\xi$.}$\hfill{\Box}$
\end{remark}

As before, take two linearly independent elements
$J\, , L\, \in\, \Gamma$. Consider the coverings
$$
\gamma_J\, :\, Y_J\, \longrightarrow\, X ~\,~\, \text{~and~}
~\,~\, \gamma_L\, :\, Y_L\, \longrightarrow\, X
$$
constructed
as in Eq. \eqref{e5} from $J$ and $L$ respectively. Since the
Galois groups of both $\gamma_J$ and $\gamma_L$ are
${\mathbb Z}/r{\mathbb Z}$, we get surjective homomorphisms
\begin{equation}\label{rl1}
\rho_J\, :\, H_1(X,\, {\mathbb Z})\, \longrightarrow\,
{\mathbb Z}/r{\mathbb Z}
\end{equation}
and
\begin{equation}\label{rl2}
\rho_L\, :\, H_1(X,\, {\mathbb Z})\, \longrightarrow\,
{\mathbb Z}/r{\mathbb Z}\, .
\end{equation}
Using $\rho_J$ and $\rho_L$, we will construct a
homomorphism from
$H_1(X,\, {\mathbb Z})$ to $\text{PGL}(r,{\mathbb C})$.

Let $D$ be the $r\times r$ diagonal matrix whose
$(j\, ,j)$--th
entry is $\exp(2\pi\sqrt{-1}j/r)$. Let
\begin{equation}\label{rl3}
\rho'_1\, :\, {\mathbb Z}/r{\mathbb Z}\, \longrightarrow\,
\text{PGL}(r,{\mathbb C})
\end{equation}
be the homomorphism that sends any $\tau$ to the image of
$D^\tau$ in $\text{PGL}(r,{\mathbb C})$.

Let
$$
R\, \in\, \text{GL}(r, {\mathbb C})
$$
be the matrix defined by $R(e_i)\,=\, e_{i+1}$ for
all $i\,\in\, [1\, ,r-1]$ and $R(e_r)\, =\, e_1$, where
$\{e_j\}_{j=1}^r$ is the standard basis of
${\mathbb C}^r$. Let
\begin{equation}\label{rl4}
\rho'_2\, :\, {\mathbb Z}/r{\mathbb Z}\, \longrightarrow\,
\text{PGL}(r,{\mathbb C})
\end{equation}
be the homomorphism that sends any $\tau$ to the
image of $R^\tau$ in $\text{PGL}(r,{\mathbb C})$. Note that
the image of $R$, in $\text{PGL}(r,{\mathbb C})$, commutes
with the image of the above defined diagonal matrix $D$.
Consequently, the images of the two homomorphisms
$\rho'_1$ and $\rho'_2$ commute.

Let
\begin{equation}\label{rl5}
\Psi\, :\, H_1(X,\, {\mathbb Z})\, \longrightarrow\,
\text{PGL}(r,{\mathbb C})
\end{equation}
be the homomorphism defined by
$\gamma\, \longmapsto\, \rho'_1(\rho_J(\gamma))
\rho'_2(\rho_L(\gamma))$, where $\rho_J$ (respectively, $\rho_L$)
is defined in Eq. \eqref{rl1} (respectively, Eq. \eqref{rl2}), and
$\rho'_1$ (respectively, $\rho'_2$)
is defined in Eq. \eqref{rl3} (respectively, Eq. \eqref{rl4}).
Since the images of $\rho'_1$ and $\rho'_2$ commute, the map
$\Psi$ is indeed a homomorphism.

The group $H_1(X,\, {\mathbb Z})$ is a quotient of the fundamental
group of $X$, and $\text{PGL}(r,{\mathbb C})$ has the standard
action on ${\mathbb C}{\mathbb P}^{r-1}$.
Hence any homomorphism from $H_1(X,\, {\mathbb Z})$
to $\text{PGL}(r,{\mathbb C})$ defines a flat projective bundle
over $X$ of relative dimension $r-1$. Let
\begin{equation}\label{pjl}
{\mathcal P}_{J,L}\, \longrightarrow\, X
\end{equation}
be the flat projective bundle given by the homomorphism $\Psi$
in Eq. \eqref{rl5}.

Let $\underline{\text{GL}(r,{\mathbb C})}$ (respectively,
$\underline{\text{PGL}(r,{\mathbb C})}$) be the sheaf of locally
constant functions on $X$ with values in $\text{GL}(r,{\mathbb C})$
(respectively, $\text{PGL}(r,{\mathbb C})$). We have the short
exact sequence of sheaves
\begin{equation}\label{sh0}
e\,\longrightarrow\, \underline{{\mathbb Z}/r{\mathbb Z}}
\,\longrightarrow\, \underline{\text{GL}(r,{\mathbb C})}
\,\longrightarrow\,\underline{\text{PGL}(r,{\mathbb C})}
\,\longrightarrow\, e
\end{equation}
on $X$, where $\underline{{\mathbb Z}/r{\mathbb Z}}$
is the sheaf of locally constant functions with values in ${\mathbb
Z}/r{\mathbb Z}$; for notational convenience, we will
denote the sheaf $\underline{{\mathbb Z}/r{\mathbb Z}}$ by
${\mathbb Z}/r{\mathbb Z}$. Let
\begin{equation}\label{e16}
\chi\, :\, H^1(X,\, \underline{\text{PGL}(r,{\mathbb C})})
\,\longrightarrow\, H^2(X,\, {\mathbb Z}/r{\mathbb Z})
\,=\, {\mathbb Z}/r{\mathbb Z}
\end{equation}
be the homomorphism in the exact sequence of cohomologies
associated to the short exact sequence
in Eq. \eqref{sh0}. The projective bundle
${\mathcal P}_{J,L}$ in Eq. \eqref{pjl} defines an element
$$
c({\mathcal P}_{J,L})\, \in\,
H^1(X,\, \underline{\text{PGL}(r,{\mathbb C})})\, .
$$
Let
\begin{equation}\label{e17}
A_{J,L}\, :=\, \chi(c({\mathcal P}_{J,L}))\, \in\,
H^2(X,\, {\mathbb Z}/r{\mathbb Z})
\,=\, {\mathbb Z}/r{\mathbb Z}
\end{equation}
be the cohomology class, where $\chi$ is the homomorphism in
Eq. \eqref{e16}.

A homomorphism $H_1(X,\, {\mathbb Z})\, \longrightarrow\,
{\mathbb Z}/r{\mathbb Z}$ defines a cohomology class
in $H^1(X,\, {\mathbb Z}/r{\mathbb Z})$. Let
\begin{equation}\label{e-5}
\overline{\rho}_J\, , \overline{\rho}_L\, \in\,
H^1(X,\, {\mathbb Z}/r{\mathbb Z})
\end{equation}
be the cohomology classes corresponding to the homomorphisms
$\rho_J$ and $\rho_L$ constructed in Eq. \eqref{rl1} and
Eq. \eqref{rl2} respectively. Let
$$
\overline{\rho}_J\cup \overline{\rho}_L\,\in\,
H^2(X,\, {\mathbb Z}/r{\mathbb Z})
$$
be the cup product. It can be checked that
\begin{equation}\label{e18}
A_{J,L}\, =\, \overline{\rho}_J\cup \overline{\rho}_L
\, \in\, H^2(X,\, {\mathbb Z}/r{\mathbb Z})\, ,
\end{equation}
where $A_{J,L}$ is constructed in Eq. \eqref{e17}.

Given any holomorphic projective bundle $\mathbb P$ over $X$,
there is a holomorphic vector bundle ${\mathcal V}\,
\longrightarrow\, X$ such that $\mathbb P$ is
isomorphic to the projective bundle over $X$ parametrizing
the lines in the fibers of $\mathcal V$. Let
\begin{equation}\label{e19}
{\mathcal W}\, \longrightarrow\, X
\end{equation}
be a holomorphic vector bundle of rank $r$ such that the
holomorphic
projective bundle over $X$ parametrizing lines in the fibers of
${\mathcal W}$ is holomorphically isomorphic to the
projective bundle ${\mathcal P}_{J,L}$ in Eq. \eqref{pjl}.

\begin{proposition}\label{prop2}
The vector bundle ${\mathcal W}$ in Eq. \eqref{e19} is stable.

The image of ${\rm degree}({\mathcal W})\, \in\, {\mathbb Z}$
in ${\mathbb Z}/r{\mathbb Z}$ coincides with
$A_{J,L}$ in Eq. \eqref{e17}.

Also,
\begin{equation}\label{e20}
End({\mathcal W})\,=\,\bigoplus_{i,j=0}^{r-1} J^{\otimes i}
\bigotimes L^{\otimes j}\, .
\end{equation}
In particular, ${\mathcal W}$ is isomorphic to both
${\mathcal W}\bigotimes J$ and ${\mathcal W}\bigotimes L$.
\end{proposition}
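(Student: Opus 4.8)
The plan is to establish the decomposition \eqref{e20} first, since the stability and the tensor--isomorphisms both follow from it. Write $\zeta\,=\,\exp(2\pi\sqrt{-1}/r)$, and recall that the ``clock'' and ``shift'' matrices $D$ and $R$ of \eqref{rl3}--\eqref{rl4} satisfy $DR\,=\,\zeta RD$, so that the $r^2$ products $\{D^iR^j\}_{0\le i,j\le r-1}$ form a basis of the space $M_r({\mathbb C})$ of all $r\times r$ matrices. Since $\mathcal P_{J,L}$ is the flat projective bundle attached to the homomorphism $\Psi$ in \eqref{rl5}, its adjoint bundle is $\text{ad}(\mathcal W)\,\subset\, End(\mathcal W)$, which does not depend on the choice of $\mathcal W$ (any two choices differ by tensoring with a line bundle, and $End$ is unchanged). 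Hence $End(\mathcal W)\,=\,\mathcal O_X\oplus\text{ad}(\mathcal W)$ is the flat bundle associated to the conjugation (adjoint) action of $\Psi$ on $M_r({\mathbb C})$, which is well defined on $\text{PGL}(r,{\mathbb C})$. A direct computation from $DR\,=\,\zeta RD$ gives $\text{Ad}(D^aR^b)(D^iR^j)\,=\,\zeta^{aj-bi}D^iR^j$, so each line ${\mathbb C}\cdot D^iR^j$ is preserved by $\text{Ad}\circ\Psi$, and the corresponding flat line bundle has monodromy character $\gamma\,\longmapsto\,\zeta^{j\rho_J(\gamma)-i\rho_L(\gamma)}$. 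Matching the monodromy $\zeta^{\rho_J}$ (respectively, $\zeta^{\rho_L}$) of the cyclic cover $\gamma_J$ (respectively, $\gamma_L$) with the $r$--torsion flat line bundle $J$ (respectively, $L$), this flat line bundle is $J^{\otimes j}\bigotimes L^{\otimes(-i)}$. As $(i,j)$ runs over $[0\,,r-1]^2$ so does $(j\,,r-i\bmod r)$, and summing the $r^2$ lines yields \eqref{e20}.

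From \eqref{e20}, each summand $J^{\otimes i}\bigotimes L^{\otimes j}$ with $(i,j)\,\neq\,(0,0)$ is a nontrivial holomorphic line bundle of degree zero --- nontrivial because $J$ and $L$ were taken linearly independent in $\Gamma$ (as observed after \eqref{e13}) --- and so has no nonzero global section. Therefore $\dim H^0(X,\, End(\mathcal W))\,=\,1$, i.e. $\mathcal W$ is simple. Moreover $End(\mathcal W)$, being a direct sum of degree--zero line bundles, is polystable of degree zero; by the standard fact that a vector bundle is polystable whenever its endomorphism bundle is polystable (the converse of the observation in Remark \ref{rem-st}), the bundle $\mathcal W$ is polystable. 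A polystable bundle that is simple is stable, which proves the first assertion. For the last assertion, the summand $J\,=\,J^{\otimes 1}\bigotimes L^{\otimes 0}$ of $End(\mathcal W)\,=\,Hom(\mathcal W,\mathcal W)$ provides a nonzero homomorphism $\mathcal W\,\longrightarrow\,\mathcal W\bigotimes J$; since both bundles are stable of the same slope (as $\text{degree}(J)\,=\,0$), this homomorphism is an isomorphism. The same argument applied to $L$ gives $\mathcal W\,\cong\,\mathcal W\bigotimes L$.

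It remains to identify $\text{degree}(\mathcal W)\bmod r$ with $A_{J,L}$. By construction $A_{J,L}\,=\,\chi(c(\mathcal P_{J,L}))$ is the image of $\mathcal P_{J,L}\,=\,{\mathbb P}(\mathcal W)$ under the connecting homomorphism $\chi$ of \eqref{e16}, i.e. the obstruction to reducing the structure group of ${\mathbb P}(\mathcal W)$ from $\text{PGL}(r,{\mathbb C})$ to $\text{SL}(r,{\mathbb C})$ --- equivalently, to finding a vector bundle $\mathcal W'$ with ${\mathbb P}(\mathcal W')\,\cong\,{\mathbb P}(\mathcal W)$ and trivial determinant. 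Such a $\mathcal W'$ exists precisely when $\det\mathcal W$ admits an $r$--th root in $\text{Pic}(X)$, which on a curve happens if and only if $\text{degree}(\mathcal W)\,\equiv\,0\pmod r$; the standard comparison between this connecting homomorphism and the first Chern class then identifies $\chi(c({\mathbb P}(\mathcal W)))$ with the image of $\text{degree}(\mathcal W)$ in ${\mathbb Z}/r{\mathbb Z}$. Alternatively one may invoke \eqref{e18}: a flat $\text{GL}$--lift of $\Psi$ would require commuting matrix lifts of $\rho'_1(\rho_J(\gamma))$ and $\rho'_2(\rho_L(\gamma))$, and the failure of $D$ and $R$ to commute produces exactly the class $\overline{\rho}_J\cup\overline{\rho}_L\,=\,A_{J,L}$.

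The conceptual core --- diagonalizing the adjoint action in the basis $\{D^iR^j\}$ and reading off the monodromy characters --- is clean, and from \eqref{e20} stability and the tensor--isomorphisms follow formally. The step demanding the most care is the last one: pinning down the sign and normalization conventions so that the abstractly defined class $\chi(c(\mathcal P_{J,L}))$ of \eqref{e17} matches $\text{degree}(\mathcal W)\bmod r$ on the nose rather than up to a unit, and making precise the passage between the flat (locally constant) obstruction associated to \eqref{sh0} and the topological first Chern class. I expect this bookkeeping, rather than any substantive difficulty, to be the main hurdle.
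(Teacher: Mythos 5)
Your proposal is correct, and on two of the three assertions it coincides with the paper's own proof: the paper also obtains \eqref{e20} by viewing $End({\mathcal W})$ as the flat bundle associated, via $\Psi$, to the adjoint action of ${\rm PGL}(r,{\mathbb C})$ on ${\rm M}(r,{\mathbb C})$, and decomposing that $H_1(X,\,{\mathbb Z})$--module as $\bigoplus_{i,j}{\mathfrak m}(J)^{\otimes i}\bigotimes {\mathfrak m}(L)^{\otimes j}$ (Eq. \eqref{e21}); your computation ${\rm Ad}(D^aR^b)(D^iR^j)\,=\,\zeta^{aj-bi}D^iR^j$ (with $\zeta\,=\,\exp(2\pi\sqrt{-1}/r)$) simply makes that assertion explicit, and your proof that ${\mathcal W}\,\cong\,{\mathcal W}\bigotimes J$ from the summand $J$ of $End({\mathcal W})$ plus stability is verbatim the paper's argument. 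Where you genuinely differ is stability itself. The paper proves it \emph{before}, and independently of, \eqref{e20}: the image $\Psi(H_1(X,\,{\mathbb Z}))$ is finite, hence conjugate into a maximal compact subgroup, and it leaves invariant no proper linear subspace of ${\mathbb C}{\mathbb P}^{r-1}$, so Ramanathan's theorem \cite[p.~146, Theorem~7.1]{Ra} makes the flat principal ${\rm PGL}(r,{\mathbb C})$--bundle ${\mathcal P}_{J,L}$ stable, hence ${\mathcal W}$ stable. You instead extract stability from \eqref{e20}: simplicity from the nontriviality of the $r^2-1$ nonconstant summands (correctly using linear independence of $J$ and $L$), and polystability of ${\mathcal W}$ from polystability of $End({\mathcal W})$. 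This route is valid and has the structural appeal that \eqref{e20} drives the whole proposition; but note that the fact you invoke is not literally ``the converse of Remark \ref{rem-st}'' --- that converse is false, since polystable non-stable bundles also have polystable endomorphism bundles. The statement you actually need, that polystability of $End(E)$ forces polystability of $E$ (after which simple plus polystable gives stable), is true and known --- it is the vector-bundle case of the equivalence between polystability of a principal bundle and of its adjoint bundle (Anchouche--Biswas), and on a curve it can be proved by hand by splitting off a stable quotient of equal slope --- but it is an input of weight comparable to the Ramanathan theorem it replaces, so it should be cited or proved rather than labelled standard in passing. Finally, on the degree claim both you and the paper are brief: the paper declares it immediate from the definition of $A_{J,L}$ in Eq. \eqref{e17}, and your obstruction-theoretic elaboration (comparing the flat lifting obstruction for the sequence \eqref{sh0} with $c_1$ mod $r$) is exactly the bookkeeping one would need to write out in full.
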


\begin{proof}
Consider the homomorphism $\Psi$ in Eq. \eqref{rl5}.
Its image is a finite subgroup of $\text{PGL}(r,{\mathbb C})$,
and hence $\Psi(H_1(X,\, {\mathbb Z}))$ lies inside
a maximal compact subgroup of $\text{PGL}(r,{\mathbb C})$.
Also, the subgroup
$$
\Psi(H_1(X,\, {\mathbb Z}))\, \subset\, \text{PGL}(r,{\mathbb C})
$$
is irreducible in following sense. Consider the standard action
of $\text{PGL}(r,{\mathbb C})$ on the projective space ${\mathbb
C}{\mathbb P}^{r-1}$ that parametrizes all lines in
${\mathbb C}^r$. The action of the subgroup
$\Psi(H_1(X,\, {\mathbb Z}))$ leaves invariant no proper linear
subspace of ${\mathbb C}{\mathbb P}^{r-1}$.

Since $\Psi(H_1(X,\, {\mathbb Z}))$ is an irreducible subgroup
of $\text{PGL}(r, {\mathbb C})$
lying inside a maximal compact subgroup, it follows that the
principal $\text{PGL}(r, {\mathbb C})$--bundle over $X$
defined by the projective bundle ${\mathcal P}_{J,L}$ (see
Eq. \eqref{pjl}) is stable \cite[p. 146, Theorem 7.1]{Ra}.
Consequently, the corresponding vector bundle ${\mathcal W}$ in
Eq. \eqref{e19} is stable.

{}From the definition of $A_{J,L}$ in Eq. \eqref{e17} it
follows that
$$
{\rm degree}({\mathcal W})\, \equiv\, A_{J,L}\,
\text{~mod~}\, r\, .
$$

To construct the isomorphism in Eq. \eqref{e20}, consider
the homomorphism
$$
h\, :\, {\mathbb Z}/r{\mathbb Z}\,\longrightarrow\,
{\mathbb C}^*
$$
defined by $n\, \longmapsto\, \exp(2\pi\sqrt{-1}n/r)$.
We note that $h\circ \rho_J$ is a character of
$H_1(X,\, {\mathbb Z})$, where $\rho_J$ is the
homomorphism in Eq. \eqref{rl1}. Any character of
$H_1(X,\, {\mathbb Z})$ defines a flat complex line
bundle over $X$ (since $H_1(X,\, {\mathbb Z})$ is
a quotient the fundamental group of $X$, a
character of $H_1(X,\, {\mathbb Z})$ is also a
character of the fundamental group, and hence any
character of $H_1(X,\, {\mathbb Z})$ defines
a flat complex line bundle over $X$). The holomorphic
line bundle corresponding to the character $h\circ
\rho_J$ is $J$ itself. Similarly, the holomorphic line
bundle over $X$ corresponding to the character $h\circ
\rho_L$ of $H_1(X,\, {\mathbb Z})$, where $\rho_L$ is
the homomorphism in Eq. \eqref{rl2}, is identified
with $L$.

Let ${\mathfrak m}(J)$ (respectively,
${\mathfrak m}(L)$) be the one--dimensional complex
$H_1(X,\, {\mathbb Z})$--module defined by the character
$h\circ \rho_J$ (respectively, $h\circ \rho_L$) of
$H_1(X,\, {\mathbb Z})$. The holomorphic line
bundle over $X$ associated to the $H_1(X,\, {\mathbb
Z})$--module ${\mathfrak m}(J)$ (respectively,
${\mathfrak m}(L)$) coincides with the holomorphic line
bundle corresponding to the character $h\circ
\rho_J$ (respectively, $h\circ
\rho_L$) of $H_1(X,\, {\mathbb Z})$. Therefore,
the holomorphic line
bundle over $X$ associated to the $H_1(X,\, {\mathbb
Z})$--module ${\mathfrak m}(J)$ (respectively,
${\mathfrak m}(L)$) coincides with $J$ (respectively, $L$).

On the other hand, consider
the adjoint action of $\text{PGL}(r, {\mathbb C})$
on the vector space $\text{M}(r,{\mathbb C})$ of
$r\times r$--matrices with entries in
$\mathbb C$. Using this action, and the homomorphism
$\Psi$ constructed in Eq. \eqref{rl5}, the vector
space $\text{M}(r,{\mathbb C})$ becomes a
$H_1(X,\, {\mathbb Z})$--module. This
$H_1(X,\, {\mathbb Z})$--module $\text{M}(r,
{\mathbb C})$ has the following decomposition:
\begin{equation}\label{e21}
\text{M}(r,{\mathbb C})\,=\, \bigoplus_{i=0}^{r-1}
\bigoplus_{j=0}^{r-1} {\mathfrak m}(J)^{\otimes i}
\bigotimes {\mathfrak m}(L)^{\otimes j}\,
\end{equation}
where ${\mathfrak m}(J)$ and ${\mathfrak m}(L)$ are
the one--dimensional $H_1(X,\, {\mathbb Z})$--modules
defined above.

The holomorphic vector bundle over $X$ associated to the
above mentioned
$H_1(X,\, {\mathbb Z})$--module $\text{M}(r,{\mathbb C})$
is identified with the vector bundle $End({\mathcal W})$,
where $\mathcal W$ is the vector bundle in Eq. \eqref{e19}.
On the other hand, we noted earlier that the
holomorphic line bundle over $X$
associated to the $H_1(X,\, {\mathbb Z})$--module
${\mathfrak m}(J)$ (respectively, ${\mathfrak m}(L)$)
is $J$ (respectively, $L$). Therefore, fixing an isomorphism
as in Eq. \eqref{e21} we obtain an isomorphism as in
Eq. \eqref{e20}.

A nonzero holomorphic homomorphism
\begin{equation}\label{p}
p\, :\, End({\mathcal W})\, \longrightarrow\, J
\end{equation}
gives a nonzero holomorphic section of
$End({\mathcal W})^\vee\bigotimes J\,=\, End({\mathcal W})
\bigotimes J$. Hence $p$ gives a nonzero holomorphic
homomorphism of vector bundles
$$
{\mathcal W}\, \longrightarrow\,{\mathcal W}\bigotimes J\, .
$$
Since the vector bundle $\mathcal W$ is stable,
and $\text{degree}(J)\,=\, 0$, any nonzero
homomorphism ${\mathcal W}\, \longrightarrow\,{\mathcal W}
\bigotimes J$ must be an isomorphism.

The decomposition in Eq. \eqref{e20} ensures that a
nonzero homomorphism $p$ as in Eq. \eqref{p} exists.
Hence we conclude that $\mathcal W$
is isomorphic to ${\mathcal W}\bigotimes J$. Similarly,
the vector bundle ${\mathcal W}\bigotimes L$
is isomorphic to ${\mathcal W}$. This completes the
proof of the proposition.
\end{proof}

Recall that ${\mathcal M}_\xi (r)^L$ is the fixed point
set defined in Eq. \eqref{e03}.

\begin{lemma}\label{lem2}
Take two linearly independent elements
$J\, , L\, \in\, \Gamma$. Let
$$
\overline{\rho}_J\, , \overline{\rho}_L\,
\in\, H^1(X,\, {\mathbb Z}/r{\mathbb Z})
$$
be the corresponding cohomology classes constructed
as in Eq. \eqref{e-5}. Then
$$
{\mathcal M}_\xi(r)^J\bigcap {\mathcal M}_\xi
(r)^L\,=\, \emptyset
$$
if and only the cup product $\overline{\rho}_J\cup
\overline{\rho}_L\,\in\, H^2(X,\, {\mathbb Z}/r{\mathbb Z})$
vanishes.

If ${\mathcal M}_\xi(r)^J\bigcap {\mathcal M}_\xi
(r)^L\,\not=\, \emptyset$, then
\begin{equation}\label{ca.}
\# ({\mathcal M}_\xi(r)^J\bigcap {\mathcal M}_\xi
(r)^L)\,=\, r^{2g-2}\, .
\end{equation}
\end{lemma}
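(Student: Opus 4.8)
The plan is to prove the two assertions separately, with the stable bundle $\mathcal{W}$ from Proposition \ref{prop2} as the central object. Recall that for linearly independent $J,L\in\Gamma$ this $\mathcal{W}$ is stable, satisfies $End(\mathcal{W})=\bigoplus_{i,j=0}^{r-1}J^{\otimes i}\bigotimes L^{\otimes j}$ (so in particular $\mathcal{W}\cong\mathcal{W}\bigotimes J\cong\mathcal{W}\bigotimes L$), and has $\deg(\mathcal{W})\equiv A_{J,L}\pmod r$, where by Eq.~\eqref{e18} one has $A_{J,L}=\overline{\rho}_J\cup\overline{\rho}_L$. The only property that must be transferred from $\mathcal{W}$ to $\mathcal{M}_\xi(r)$ is the determinant: tensoring by a line bundle $M$ changes $\det$ by $M^{\otimes r}$ and preserves all three properties above (cf.\ Remark \ref{rem1}), but it changes $\deg$ only by multiples of $r$.

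For the only-if direction I would argue by contraposition. Assume $A_{J,L}=0$ and suppose, for a contradiction, that some $E$ lies in $\mathcal{M}_\xi(r)^J\bigcap\mathcal{M}_\xi(r)^L$. By Lemma \ref{th.is.}, $End(E)=\bigoplus_{i,j}J^{\otimes i}\bigotimes L^{\otimes j}=End(\mathcal{W})$. Two stable bundles of the same rank whose endomorphism bundles are isomorphic differ by a line bundle twist or by a dual-and-twist (this is exactly the dichotomy established before Corollary \ref{cor1}), so $E\cong\mathcal{W}\bigotimes\zeta$ or $E\cong\mathcal{W}^\vee\bigotimes\zeta$ for some line bundle $\zeta$. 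In either case $\deg(E)\equiv\pm\deg(\mathcal{W})\equiv\pm A_{J,L}\equiv 0\pmod r$, contradicting the fact that $\deg(E)=d$ is coprime to $r$. Hence the intersection is empty.

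For the if direction, assume $A_{J,L}=\overline{\rho}_J\cup\overline{\rho}_L\neq 0$; since $r$ is prime, $A_{J,L}$ is a unit of $\mathbb{Z}/r\mathbb{Z}$. The key observation is that $\mathcal{M}_\xi(r)^J$ depends only on the cyclic subgroup $\langle J\rangle$: as $\phi_{J^{\otimes a}}=\phi_J^{a}$ and $\phi_J$ has order dividing the prime $r$, for $a$ coprime to $r$ one has $E\cong E\bigotimes J$ if and only if $E\cong E\bigotimes J^{\otimes a}$. I would choose $a\in\{1,\dots,r-1\}$ with $a\cdot A_{J,L}\equiv d\pmod r$ (possible since $d$ and $A_{J,L}$ are units) and apply Proposition \ref{prop2} to the linearly independent pair $(J^{\otimes a},L)$. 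Because $\overline{\rho}_{J^{\otimes a}}=a\,\overline{\rho}_J$, the resulting stable bundle $\mathcal{W}'$ has $\deg(\mathcal{W}')\equiv A_{J^{\otimes a},L}=a\,\overline{\rho}_J\cup\overline{\rho}_L\equiv d\pmod r$ and satisfies $\mathcal{W}'\cong\mathcal{W}'\bigotimes J^{\otimes a}\cong\mathcal{W}'\bigotimes L$, whence also $\mathcal{W}'\cong\mathcal{W}'\bigotimes J$. Now $\xi\bigotimes(\det\mathcal{W}')^{-1}$ has degree divisible by $r$, hence admits an $r$-th root $M$ in $\text{Pic}(X)$, and $E:=\mathcal{W}'\bigotimes M$ is stable with $\det E=\xi$ and $E\cong E\bigotimes J\cong E\bigotimes L$. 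Thus $E\in\mathcal{M}_\xi(r)^J\bigcap\mathcal{M}_\xi(r)^L$, which is therefore nonempty. I expect this degree-matching step---recognizing that one must feed $(J^{\otimes a},L)$ rather than $(J,L)$ into Proposition \ref{prop2} in order to realize the prescribed residue $d\bmod r$---to be the main obstacle.

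Finally, for the count in Eq.~\eqref{ca.}, assume the intersection is nonempty and fix $E_0$ in it. By Remark \ref{rem1} the group $\Gamma$ acts on the intersection by $E\mapsto E\bigotimes\zeta$, and the stabilizer of $E_0$ is $\{\zeta\in\Gamma:H^0(X,End(E_0)\bigotimes\zeta)\neq 0\}=\langle J,L\rangle$, using $End(E_0)=\bigoplus_{i,j}J^{\otimes i}\bigotimes L^{\otimes j}$ together with the fact that the $r^2$ line bundles $J^{\otimes i}\bigotimes L^{\otimes j}$ (for $0\le i,j\le r-1$) are distinct. Hence the $\Gamma$-orbit of $E_0$ has $r^{2g}/r^2=r^{2g-2}$ elements. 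It remains to see that this orbit exhausts the intersection. By Corollary \ref{cor1} any $F$ in the intersection satisfies $F\cong E_0\bigotimes\zeta$ or $F\cong E_0^\vee\bigotimes\zeta$ with $\zeta\in\Gamma$, the latter forcing $r=2$; but in that case $\det(E_0^\vee\bigotimes\zeta)=\xi^{-1}\neq\xi$ because $\deg\xi=d$ is odd, so the dual case is incompatible with $\det F=\xi$ and never occurs. Therefore the intersection equals the single $\Gamma$-orbit of $E_0$, of cardinality $r^{2g-2}$.
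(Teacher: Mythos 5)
Your proposal is correct and takes essentially the same route as the paper's own proof: both directions hinge on Proposition \ref{prop2} together with Eq.~\eqref{e18} (replacing $J$ by a power $J^{\otimes a}$ so that the cup product matches $d$ mod $r$, twisting $\mathcal W$ by an $r$-th root to land in $\mathcal{M}_\xi(r)$, and recovering $J$-invariance from $J^{\otimes a}$-invariance by iterating the isomorphism), while the count follows from Corollary \ref{cor1} and Remark \ref{rem1} via the orbit--stabilizer argument with stabilizer $\langle J,L\rangle$ of order $r^2$. The differences are purely expository: you fix the determinant after deducing $J$-invariance rather than before, and you spell out details the paper only asserts (the degree congruence forcing emptiness, the identification of the stabilizer, and the elimination of the dual case).
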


\begin{proof}
First assume that
\begin{equation}\label{cup}
\overline{\rho}_J\cup
\overline{\rho}_L\, \equiv\, d\, :=\, \text{degree}(\xi)
~\, \text{~mod~}~\, r\, .
\end{equation}
Then from the second part of Proposition \ref{prop2} we know that
$$
\text{degree}({\mathcal W})\, =\, ar +d
$$
for some integer $a$, where $\mathcal W$ is the vector bundle
in Eq. \eqref{e19}. Since $\mathcal W$
is also stable (see the first part of Proposition \ref{prop2}),
there is a holomorphic line bundle
$$
{\mathcal L}_0\, \longrightarrow\, X
$$
of degree $-a$ such that
$$
{\mathcal W}_0\, :=\, {\mathcal W}\bigotimes {\mathcal L}_0\,
\in\, {\mathcal M}_\xi(r)\, .
$$

The vector bundle ${\mathcal W}\, \longrightarrow\, X$ is
isomorphic to both ${\mathcal W}\bigotimes J$
and ${\mathcal W}\bigotimes L$ (see Proposition
\ref{prop2}). Hence ${\mathcal W}_0$ is isomorphic to both
${\mathcal W}_0\bigotimes J$ and ${\mathcal W}_0\bigotimes L$.
Indeed, for any isomorphism
$$
f_0\, :\,  {\mathcal W}\, \longrightarrow\,{\mathcal W}\bigotimes
J\, ,
$$
the homomorphism
$$
f_0\bigotimes \text{Id}_{{\mathcal L}_0}\, :\,
{\mathcal W}_0\, :=\, {\mathcal W}\bigotimes {\mathcal L}_0\,
\longrightarrow\,{\mathcal W}\bigotimes J\bigotimes {\mathcal L}_0
\,=\,{\mathcal W}\bigotimes {\mathcal L}_0\bigotimes J
\,=\, {\mathcal W}_0\bigotimes J
$$
is an isomorphism; similarly, ${\mathcal W}_0$ is isomorphic to
${\mathcal W}_0\bigotimes L$. In other words, we have
\begin{equation}\label{ne}
{\mathcal W}_0\, \in\, {\mathcal M}_\xi(r)^J\bigcap {\mathcal
M}_\xi(r)^L\, .
\end{equation}

Now assume that
\begin{equation}\label{delta}
\delta\, :=\, \overline{\rho}_J\cup
\overline{\rho}_L\, \not=\, 0\, ,
\end{equation}
where $\overline{\rho}_J$ and $\overline{\rho}_L$ are as
in Eq. \eqref{cup}. Fix a positive integer $n_0$ such that
\begin{equation}\label{delta2}
d\, \equiv\, n_0\delta ~\, \text{~mod~}~\, r\, .
\end{equation}
We note that such an integer $n_0$ exists because $r$ is a
prime number, $d\,\not\equiv\, 0$ mod $r$, and
$\delta\, \not=\, 0$. Replace the line bundle $J$ by
\begin{equation}\label{delta3}
J_0\, =\, J^{\otimes n_0}\, ,
\end{equation}
and keep the line bundle $L$ unchanged. From Eq. \eqref{delta}
and Eq. \eqref{delta2},
\begin{equation}\label{c}
\overline{\rho}_{J_0}\cup
\overline{\rho}_L\, \equiv\, d ~\, \text{~mod~}~\, r\, ,
\end{equation}
where $\overline{\rho}_{J_0}\, \in\, H^1(X,\, {\mathbb Z}/
r{\mathbb Z})$ is the cohomology class constructed as in
Eq. \eqref{e-5} for the line bundle $J_0$ in Eq.
\eqref{delta3}.

We noted above that from Eq. \eqref{c} it follows that
$$
{\mathcal M}_\xi(r)^{J_0}\bigcap {\mathcal
M}_\xi(r)^L\, \not=\, \emptyset
$$
(see Eq. \eqref{ne}). Take any
\begin{equation}\label{i}
V\, \in\, {\mathcal M}_\xi(r)^{J_0}\bigcap {\mathcal
M}_\xi(r)^L\, .
\end{equation}
Since $V$ is isomorphic to $V\bigotimes J_0$, it follows
that $V$ is isomorphic to $V\bigotimes J^{\otimes n}_0$ for all
$n$. Indeed, if
$$
f\, :\, V \, \longrightarrow\, V\bigotimes J_0
$$
is an isomorphism, then the composition
$$
V \, \stackrel{f}{\longrightarrow}\, V\bigotimes J_0
\, \stackrel{f\otimes\text{Id}_{J_0}}{\longrightarrow}\,
V\bigotimes J^{\otimes 2}_0\,
\stackrel{f\otimes\text{Id}_{J^{\otimes 2}_0}}{\longrightarrow}\,
V\bigotimes J^{\otimes 3}_0 \stackrel{f\otimes\text{Id}_{J^{\otimes
3}_0}}{\longrightarrow}\cdots
\stackrel{f\otimes\text{Id}_{J^{\otimes (n-1)}_0}}{\longrightarrow}
\,V\bigotimes J^{\otimes n}_0
$$
is an isomorphism for all $n\, \geq\, 1$.

Take a positive integer $m_0$ such that
$m_0n_0\, \equiv\, 1$ mod $r$, where $n_0$ is the integer
in Eq. \eqref{delta2}. Therefore, the line bundle
$J^{\otimes m_0}_0$ is isomorphic to $J$ (see Eq.
\eqref{delta3}). Since $V$ is isomorphic to
$V\bigotimes J^{\otimes m_0}_0\,=\, V\bigotimes J$, we conclude
that
$$
V\, \in\, {\mathcal M}_\xi(r)^{J}\, .
$$
Hence from Eq. \eqref{i},
$$
V\, \in\, {\mathcal M}_\xi(r)^{J}\bigcap {\mathcal
M}_\xi(r)^L\, .
$$

Therefore, we have proved that
$$
{\mathcal M}_\xi(r)^J\bigcap {\mathcal M}_\xi
(r)^L\,\not=\, \emptyset
$$
if $\overline{\rho}_J\cup \overline{\rho}_L\, \not=\, 0$.

To prove the converse, assume that
\begin{equation}\label{a}
\overline{\rho}_J\cup \overline{\rho}_L\, =\, 0\, .
\end{equation}
If $E\,\longrightarrow\, X$ is a holomorphic
vector bundle such that
$$
End(E)\,=\, \bigoplus_{i,j=0}^{r-1} J^{\otimes i}\bigotimes
L^{\otimes j}\, ,
$$
then from Eq. \eqref{a} it follows that $\text{degree}(E)\,
\equiv\, 0$ mod $r$. Since $d\,:=\,\text{degree}(\xi)$ is
coprime to $r$, this implies that
$$
{\mathcal M}_\xi(r)^J\bigcap {\mathcal M}_\xi
(r)^L\,=\, \emptyset\, .
$$

Therefore,
$$
{\mathcal M}_\xi(r)^J\bigcap {\mathcal M}_\xi
(r)^L\,=\, \emptyset
$$
if and only $\overline{\rho}_J\cup \overline{\rho}_L\,=\,0$.

To prove the last statement in the lemma, assume that
$$
{\mathcal M}_\xi(r)^J\bigcap {\mathcal M}_\xi
(r)^L\,\not=\, \emptyset\, .
$$
Fix a vector bundle $E\,\in\, {\mathcal M}_\xi(r)^J
\bigcap {\mathcal M}_\xi (r)^L$. Then from Corollary
\ref{cor1} and Remark \ref{rem1} we know that
${\mathcal M}_\xi(r)^J \bigcap {\mathcal M}_\xi (r)^L$ is
the orbit of $E$ under the action of $\Gamma$ on
${\mathcal M}_\xi(r)$. The isotropy subgroup $\Gamma_E\,
\subset\, \Gamma$ for $E$ is generated by $J$ and
$L$. Now, Eq. \eqref{ca.} holds because
$\#\Gamma\, =\, r^{2g}$, and the order of the subgroup
of $\Gamma$ generated by $J$ and $L$ is $r^2$.
This completes the proof of the lemma.
\end{proof}

\section{The cohomologies}

The cohomology groups $H^i({\mathcal M}_\xi(r),\, {\mathbb Q})$,
$i\, \geq\, 0$, are computed in \cite{HN}, \cite{AB}. The
cohomology algebra $\bigoplus_{i\geq 0}H^i({\mathcal M}_\xi(r),\,
{\mathbb Q})$ is computed by Kirwan in \cite{Ki}.

Consider the
action of $\Gamma$ on $\bigoplus_{i\geq 0}H^i({\mathcal M}_\xi(r),
\, {\mathbb Q})$ given by the action of $\Gamma$ on
${\mathcal M}_\xi(r)$. It is known that this action
is trivial \cite[p. 220, Theorem 1]{HN}. Therefore, the cohomology
algebra $\bigoplus_{i\geq 0}H^i({\mathcal M}_\xi(r),
\, {\mathbb Q})$ is identified with the cohomology
algebra $\bigoplus_{i\geq 0}H^i({\mathcal M}_\xi(r)/\Gamma,
\, {\mathbb Q})$.

Take any nontrivial line bundle $L\, \in\, \Gamma\setminus
\{{\mathcal O}_X\}$. Since $r$ is a prime number, the
order of $L$ is $r$. Let
$$
\gamma_L\, :\, Y_L\, \longrightarrow\, X
$$
be the Galois covering of degree $r$ constructed as in Eq.
\eqref{e5}. Let
\begin{equation}\label{prym}
\text{Prym}_\xi(\gamma_L)\, \subset\, \text{Pic}^d(Y_L)
\end{equation}
be the Prym variety parametrizing all line bundles $\eta\,
\longrightarrow\, Y_L$
such that
$$
\det \gamma_{L*}\eta\, :=\, \bigwedge\nolimits^r
(\gamma_{L*}\eta)\, =\, \xi\, .
$$

Note that the Galois group $\text{Gal}(\gamma_L)$
acts on $\text{Prym}_\xi(\gamma_L)$. The action of
$\tau\, \in\, \text{Gal}(\gamma_L)$ on
$\text{Prym}_\xi(\gamma_L)$ sends any line bundle $\eta$
to $\tau^*\eta$. From Lemma \ref{lem1} we know that
\begin{equation}\label{i2}
\text{Prym}_\xi(\gamma_L)/\text{Gal}(\gamma_L)\,
=\, {\mathcal M}_\xi (r)^L\, .
\end{equation}
We also know that the action of $\text{Gal}(\gamma_L)$
on $\text{Prym}_\xi(\gamma_L)$ is free (see Lemma
\ref{lem-3}).

The group $\Gamma$ acts on $\text{Prym}_\xi(\gamma_L)$. The
action of any $\zeta\, \in\, \Gamma$ is given by the map
$\eta\, \longmapsto\, \eta\bigotimes\gamma^*_L\zeta$; note
that by the projection formula,
$$
\bigwedge\nolimits^r \gamma_{L*}(\eta\bigotimes
\gamma^*_L\zeta)\,=\, (\bigwedge\nolimits^r \gamma_{L*}\eta)
\bigotimes \zeta^{\otimes r}\, =\,
\bigwedge\nolimits^r \gamma_{L*}\eta\, ,
$$
hence $\eta\bigotimes \gamma^*_L\zeta\, \in\,
\text{Prym}_\xi(\gamma_L)$ if $\eta\, \in\,
\text{Prym}_\xi(\gamma_L)$.

It is straight--forward to
check that the actions of $\Gamma$ and
$\text{Gal}(\gamma_L)$ on $\text{Prym}_\xi(\gamma_L)$
commute.

For any $i\, \geq\, 0$, consider the action
of $\Gamma$ on $H^i(\text{Prym}_\xi(\gamma_L),\,
{\mathbb Q})$ given by the above action of $\Gamma$ on
$\text{Prym}_\xi(\gamma_L)$.
Since the action of $\Gamma$ on $\text{Prym}_\xi(\gamma_L)$
is through translations, it follows immediately that
this action of $\Gamma$ on $H^i(\text{Prym}_\xi(\gamma_L),
\, {\mathbb Q})$ is the trivial one.

We will now recall the topological model of a cyclic covering
of $X$ of degree $r$.

The isomorphism classes of unramified cyclic coverings
$$
Y\, \longrightarrow\, X
$$
of degree $r$ with $Y$ connected are parametrized by the
complement
$$
H^1(X,\, {\mathbb Z}/ r{\mathbb Z})_0\, :=\,
H^1(X,\, {\mathbb Z}/ r{\mathbb Z})\setminus\{0\}\, ,
$$
because the space of all the surjective homomorphisms
$$
\pi_1(X,\, x_0)\, \longrightarrow\,{\mathbb Z}/r{\mathbb Z}
$$
is parametrized by $H^1(X,\, {\mathbb Z}/r{\mathbb Z})_0$. Let
$\text{Diff}^+(X)$ denote the group of all orientation preserving
diffeomorphisms of $X$. This group $\text{Diff}^+(X)$ has a
natural action on $H^1(X,\, {\mathbb Z}/r{\mathbb Z})$. The
action of $\text{Diff}^+(X)$ on $H^1(X,\, {\mathbb Z}/r{\mathbb
Z})_0$ can be shown to be transitive. To prove
this, let $\text{Aut}(H^1(X,\, {\mathbb Z}))$ denote the
group of automorphisms of $H^1(X,\, {\mathbb Z})$
preserving the cup product; so $\text{Aut}(H^1(X,\, {\mathbb Z}))$
can be identified with the symplectic group
$\text{Sp}(2g,{\mathbb Z})$ after fixing
a symplectic basis of $H^1(X,\, {\mathbb Z})$. It is known that
the natural homomorphism
$$
\text{Diff}^+(X)\, \longrightarrow\, \text{Aut}(H^1(X,\,
{\mathbb Z}))
$$
is surjective \cite[p. 114]{HL}. On the other hand, it is
easy to check that the natural action of $\text{Aut}(H^1(X,\,
{\mathbb Z}))$ on $H^1(X,\, {\mathbb Z}/r{\mathbb Z})_0$
is transitive. Hence, we conclude that the action of
$\text{Diff}^+(X)$ on $H^1(X,\, {\mathbb Z}/r{\mathbb
Z})_0$ is transitive.

Therefore, given two unramified cyclic coverings
$$
\pi_1\, :\, Y_1\, \longrightarrow\, X ~\,~\,~\,
\text{~and~} ~\,~\,~\, \pi_2\, :\, Y_2\, \longrightarrow\,
X
$$
of degree $r$ with both $Y_1$ and $Y_2$ connected, there
is a diffeomorphism
$$
\varphi\,:\, X\, \longrightarrow\, X
$$
such that $\varphi$ pulls back the covering $\pi_2$ to
$\pi_1$.

One example of a cyclic covering of degree $r$ is the following:

Let $X_0$ be a compact surface of genus one, and let $X_1$
be a compact surface of genus $g-1$. Take an
orientation preserving free action of
${\mathbb Z}/r{\mathbb Z}$ on $X_0$. Let $X'_0$ be the
complement of $r$ open disks in $X_0$ such that
$X'_0$ is preserved by the action of ${\mathbb Z}/r{\mathbb Z}$
on $X_0$. Let $X'_1$ be the complement of a closed disk
in $X_1$. Now attach $r$ copies of $X'_1$ to $X'_0$ along the
$r$ boundary circles of
$X'_0$. The resulting compact connected surface of genus
$r(g-1) +1$ will be denoted by $Y$. The action of
${\mathbb Z}/r{\mathbb Z}$ on $X'_0$ and the permutation action
of ${\mathbb Z}/r{\mathbb Z}$ on the
$r$ copies of $X'_1$ together
define an action of ${\mathbb Z}/r{\mathbb Z}$ on $Y$. This
action is clearly free, and the quotient is of genus $g$.
Up to diffeomorphisms of $Y/({\mathbb Z}/r{\mathbb Z})$, all
connected unramified cyclic coverings of
$Y/({\mathbb Z}/r{\mathbb Z})$ of degree $r$ coincide with
the covering
\begin{equation}\label{Y}
Y\, \longrightarrow\, Y/({\mathbb Z}/r{\mathbb Z})\, .
\end{equation}
In particular, the topological model of the covering
$\gamma_L$ in Eq. \eqref{e5} is the covering in Eq. \eqref{Y}.

Using this model of $\gamma_L$ it follows that
$\text{Prym}_\xi(\gamma_L)$ (defined in Eq.
\eqref{prym}) is topologically isomorphic to a real torus of
dimension $2(r-1)(g-1)$. The action of the Galois group $\text{Gal}
(\gamma_L)$ on $H^1(\text{Prym}_\xi(\gamma_L),\, {\mathbb C})$ can
also be calculated using the above topological model of the covering
$\gamma_L$.

To calculate the action of $\text{Gal}
(\gamma_L)$ on $H^1(\text{Prym}_\xi(\gamma_L),\, {\mathbb C})$,
consider the group ${\mu}_r$ defined in Eq.
\eqref{e04}, which is identified with $\text{Gal}(\gamma_L)$. Let
$$
\widehat{\mu}_r\, :=\, \text{Hom}({\mu}_r\, , {\mathbb C}^*)
$$
be the group of characters of ${\mu}_r$. It is a cyclic
group of order $r$ generated by the tautological
character of ${\mu}_r$ defined by the inclusion of ${\mu}_r$
in ${\mathbb C}^*$.

Each nontrivial element of $\widehat{\mu}_r$
is an eigen--character of ${\mu}_r$ for the action of
$\text{Gal}(\gamma_L)\,=\, {\mu}_r$ on $H^1(\text{Prym}_\xi
(\gamma_L),\, \mathbb C)$, and furthermore, the multiplicity
of each eigen--character is $2(g-1)$.

To prove the above assertion, consider
the covering in Eq. \eqref{Y}. We noted
earlier that it is the topological model of the covering
$\gamma_L$. Let
\begin{equation}\label{A}
A\, :\, H^1(X_1,\, {\mathbb C})^{\oplus r}\, \longrightarrow\,
H^1(X_1,\, {\mathbb C})
\end{equation}
be the homomorphism defined by $$(c_1\, ,\cdots\, ,c_r)\,
\longmapsto\, \sum_{j=1}^r c_j\, ;$$ the surface $X_1$ is the
one used in the construction of the covering in Eq. \eqref{Y}.
The complex vector space $H^1(\text{Prym}_\xi(\gamma_L),\,
\mathbb C)$ is identified with the kernel of the homomorphism
$$
H^1(\text{Pic}^d(Y_L),\, \mathbb C)
\,=\, H^1(Y_L,\, {\mathbb C})\, \longrightarrow\,
H^1(X,\, {\mathbb C}){\mathbb C})
$$
that sends any $c\, \in\, H^1(Y_L,\, {\mathbb C})$
to the class in
$$
H^1(X,\, {\mathbb C}){\mathbb C})\,=\,
H^1(Y_L,\, \mathbb C)^{\text{Gal}(\gamma_L)}
$$
defined by $\sum_{\tau\in \text{Gal}(\gamma_L)}\tau^*c$.
We have
$$
H^1(\text{Pic}^d(Y_L),\, \mathbb C)\, =\,
H^1(X_1,\, {\mathbb C})^{\oplus r}\bigoplus H^1(X_0,\,
{\mathbb C})
$$
($X_0$ is the surface of genus one
in the construction of the covering in Eq. \eqref{Y}),
and
$$
H^1(\text{Prym}_\xi(\gamma_L),\, \mathbb C)\, =\,
\text{kernel}(A)\, \subset\,
H^1(X_1,\, {\mathbb C})^{\oplus r}
\,\subset\, H^1(\text{Pic}^d(Y_L),\, \mathbb C)\, ,
$$
where $A$ is the homomorphism in Eq. \eqref{A}.
The action of
$$
\exp(2\pi\sqrt{-1}/r)\,\in\, \mu_r\,=\,\text{Gal}(\gamma_L)
$$
on $H^1(\text{Pic}^d(Y_L),\, \mathbb C)\, =\,
H^1(X_1,\, {\mathbb C})^{\oplus r}\bigoplus H^1(X_0,\,
{\mathbb C})$ is given by the
automorphism defined by
$$
(c_1\, ,\cdots\, ,c_r\, ; d)\,
\longmapsto\, (c_2\, ,\cdots\, ,c_r\, , c_1\, ; d)
\, \in\, H^1(X_1,\, {\mathbb C})^{\oplus r}\bigoplus
H^1(X_0,\, {\mathbb C})\, .
$$
Also, $\dim H^1(X_1,\, {\mathbb C})\, =\, 2(g-1)$.
It is now easy to see that each nontrivial character of $\mu_r$
is an eigen--character of multiplicity $2(g-1)$ for the action
of $\text{Gal}(\gamma_L)\,=\, {\mu}_r$ on $H^1(\text{Prym}_\xi
(\gamma_L),\, \mathbb C)$.

The cohomology algebra $\bigoplus_{i\geq 0}H^i(\text{Prym}_\xi
(\gamma_L),\, {\mathbb C})$ is identified with the exterior
algebra $\bigoplus_{i\geq 0} \bigwedge^i H^1(\text{Prym}_\xi
(\gamma_L),\, \mathbb C)$. Therefore, from the above description
of the action of $\text{Gal}(\gamma_L)$ on
$H^1(\text{Prym}_\xi (\gamma_L),\, \mathbb C)$ we obtain a
description of the action of $\text{Gal}(\gamma_L)$ on the
cohomology algebra $\bigoplus_{i\geq 0}H^i(\text{Prym}_\xi
(\gamma_L),\, {\mathbb C})$.

\subsection{The Chen--Ruan cohomology}
The case of $r\, =\,2$ was already considered in \cite{BP}. Here we
will assume that $r\, \geq\, 3$.

The $i$--th Chen--Ruan cohomology group is the degree shifted direct 
sum 
 \begin{equation}\label{crgps}
  H^i_{CR}({\mathcal M}_\xi(r)/\Gamma, \, {\mathbb Q} ) \,=\, 
\bigoplus_{L 
\in \Gamma} H^{i- 2 \iota(L)} ({\mathcal M}_\xi(r)^L /\Gamma, \, {\mathbb Q}).
 \end{equation}

 The degree shifting number $\iota(L)$ is obtained from Proposition 
\ref{prop1}:
 \begin{equation}\label{shift} 
  \iota(L) \,=\, \left\{ \begin{array}{ll} 
             0 & {\rm if} \, L = \mathcal{O}_X \\
            \frac{1}{2} (r^2-r)(g-1) & {\rm otherwise.} \end{array} 
\right.       
\end{equation}

As in \cite{BP}, we will denote $H^{{\ast} + 2\iota(L)}({\mathcal 
M}_\xi(r)^L /\Gamma, \, {\mathbb Q})$
by $A^{\ast}(L)$. Then for $\alpha_1 \,\in\, A^p(L_1)$ and $\alpha_2 
\,\in\, A^q(L_2)$, the Chen--Ruan product
$$
\alpha_1 \cup \alpha_2
\,\in\, A^{p+q}(L_1 \bigotimes L_2)
$$
is defined via the relation
\begin{equation}\label{eTP}
 \langle\alpha_1 \cup \alpha_2\, , \alpha_3\rangle\, =\,
\int_{\mathbf{S}/\Gamma} e_1^*\alpha_1 \bigwedge e_2^*
\alpha_2 \bigwedge
e_3^*\alpha_3 \bigwedge c_{\rm top} \mathcal{F}
\end{equation}
for all $\alpha_3 \in A^*(L_3)$ such that $L_1\bigotimes L_2\bigotimes
L_3 \,=\, \mathcal{O}_X$, where $\langle \, ,\, \rangle$ is the 
nondegenerate bilinear Poincar\'e
pairing  for Chen--Ruan cohomology (see \cite{CR1},
\cite[(6.20)]{BP}),
$$
\mathbf{S}\, :=\, \mathcal{M}_\xi(r)^{L_1}\bigcap
\mathcal{M}_\xi(r)^{L_2}\, ,
$$
and $e_i\, :\, \mathbf{S}/\Gamma\, \longrightarrow\,
\mathcal{M}_\xi(r)^{L_i}/\Gamma$
are the canonical inclusions, and 
$\mathcal F$ is a complex
$\Gamma$--bundle over $\mathbf S$,
or equivalently, an orbifold vector bundle over
$\mathbf{S}/\Gamma$, of rank
\begin{equation}\label{erank}
{\rm rank} (\mathcal{F}) \,=\,
\dim_{\mathbb C} \mathbf{S} -
\dim_{\mathbb C} \mathcal{M}_\xi(r) + \sum_{j=1}^3 \iota(L_j)\, .
\end{equation}
 
{}From Lemma \ref{lem2} it follows that $\mathbf{S}$ is empty or zero 
dimensional if
$L_1$ and $L_2$ are linearly independent. If $\mathbf{S}$ is empty, then
the corresponding Chen--Ruan products are automatically zero.
Even otherwise, since $L_3$ is also
nontrivial if $L_1$ and $L_2$ are linearly independent,
using Eq. \eqref{shift} we get that
$${\rm rank}(\mathcal{F})\,=\, \frac{1}{2} (r-1)(r-2)(g-1)
\,>\, 0 \,=\, \dim_{\mathbb C} \mathbf{S}
$$
(recall that $r\, \geq\, 3$).
Hence, $c_{\rm top} \mathcal{F} \,=\, 0$, and again all the 
corresponding Chen--Ruan products are zero.
 
If $L_1$ and $L_2$ are linearly dependent then we have the following 
three cases:

\begin{enumerate}
\item If $L_1$ and $L_2$ are both trivial then the Chen--Ruan products 
are just the usual
products in the singular cohomology of $\mathcal{M}_\xi(r)/\Gamma$.

\item If all the three line bundles are nontrivial, meaning $L_i = 
L^{\otimes k_i}$ for some $L \,\neq\, \mathcal{O}_X$ and
$1\,\le\, k_i \,\le\, (r-1)$, $i\, \in\, \{1\, ,2\, ,3\}$, then
we have $\mathbf{S}\,=\, \mathcal{M}_\xi(r)^L$, and 
$$
{\rm rank}(\mathcal{F})\,=\, \frac{1}{2} (r^2-r)(g-1) \,>\, (r-1)(g-1) 
\,=\, \dim_{\mathbb C} \mathbf{S}
$$
(recall that $r\, \geq\, 3$).  Hence $c_{\rm top} \mathcal{F}
\,=\, 0$, and  all the corresponding Chen--Ruan products are zero. 
 
\item If exactly two of the three $L_i$'s are nontrivial, then we must have 
$L_{i_1}= L$,  $L_{i_2} = L^{\otimes (r-1)}$ and $ L_{i_3}= \mathcal{O}_X$ 
for some $L \neq \mathcal{O}_X$ and
some permutation $\{i_1,i_2, i_3 \}$ of $\{1,2,3 \}$. The calculations 
of the Chen--Ruan
products are analogous to the cases a), b) and c) in Section 6.1 of \cite{BP}.
 
\end{enumerate} 

\section*{Acknowledgments}
The first author wishes to thank the Indian
Statistical Institute for hospitality while the
work was carried out.


\end{document}